\documentclass[review,1p,fleqn,12pt]{elsarticle}
\usepackage{amsmath,amsthm,amssymb}

\textwidth=460pt \evensidemargin=2pt \oddsidemargin=2pt
\marginparsep=8pt \marginparpush=8pt \textheight=620pt
\topmargin=0pt \linespread{1.2}

\newtheorem{theorem}{Theorem}[section]
 \newtheorem{corollary}{Corollary}[section]

\newtheorem{lemma}{Lemma}[section]

\DeclareMathOperator{\RM}{Re}

\journal{Applied Mathematics and Computation}

\begin{document}

\begin{frontmatter}

\title{Marichev-Saigo-Maeda fractional integration operators of generalized Bessel functions}

\author[KFU]{Saiful. R. Mondal \corref{cor1}}
\address[KFU]{Department of   Mathematics an Statics ,
College of Science, King Faisal University, Al-Ahsa, Kingdom of Saudi Arabia}
\ead{saiful786@gmail.com}
\author[SAAU]{K. S. Nisar}
\address[SAAU]{Department of   Mathematics,
College of Arts and Science, Salman bin Abdul Aziz University, Wadi Al Dawaser, Kingdom of Saudi Arabia}
\ead{ksnisar1@gmail.com}
\cortext[cor1]{Corresponding author}
\begin{abstract}
Two integral operator involving the Appell's functions, or Horn's function in the kernel are considered.
Composition of such  functions with generalized Bessel functions of the first kind are expressed in term
of generalized Wright function and generalized hypergeometric series. Many special cases, including cosine and sine function are also discussed.
\end{abstract}

\begin{keyword} Fractional integral transform, generalized Bessel functions of first kind, generalized Wright function, generalized hypergeometric function, trigonometric functions\\

\MSC[2010] 26A33 \sep 33C05 \sep 33C10 \sep 33C20 \sep 33C60 \sep 26A09
\end{keyword}
\end{frontmatter}

\section{Introduction}
Let $\alpha$, $\alpha'$, $\beta$, $\beta'$, $\gamma \in \mathbb{C}$ and $ x>0$, then the generalized
fractional integral operator involving the Appell's functions, or Horn's function are defined as follows:
\begin{align}\label{eqn-1-frac-opt}
\left(I_{0,{+}}^{\alpha,\alpha',\beta,\beta',\gamma}f\right)(x)&=\frac{x^{-\alpha}}{\Gamma(\gamma)}
\int_{0}^{x}(x-t)^{\gamma-1}t^{-\alpha'} F_{3}\left(\alpha,\alpha',\beta,\beta';\gamma;1-\frac{t}{x},1-\frac{x}{t}\right)f(t)dt,
\end{align}
and
\begin{align}\label{eqn-2-frac-opt}
\left(I_{0,{-}}^{\alpha,\alpha',\beta,\beta',\gamma}f\right)(x)=\frac{x^{-\alpha'}}{\Gamma(\gamma)}
\int_{x}^{\infty}(t-x)^{\gamma-1}t^{-\alpha} F_{3}\left(\alpha,\alpha',\beta,\beta';\gamma;1-\frac{t}{x},1-\frac{x}{t}\right)f(t)dt,
\end{align}
with $\RM(\gamma) >0$. The generalized fractional integral operators of the type $(\ref{eqn-1-frac-opt})$  and $(\ref{eqn-2-frac-opt})$
has been introduced by Marichev \cite{Marichev} and later extended and studied by Saigo and Maeda \cite{maeda-saigo}. This operator together known as the Marichev-Saigo-Maeda operator.

The fractional integral operator has many interesting application in various sub-fields in applicable mathematical analysis,
for example, \cite{Kim-Lee-HMSri} has applications related to certain class of complex analytic functions.
The results given in \cite{Kir1, Miller-Ross, HMSri-frac-Hfunction} can be referred for some basic results on fractional calculus.

The purpose of this work is to investigate compositions of integral
transforms ($\ref{eqn-1-frac-opt}$) and ($\ref{eqn-2-frac-opt}$) with the generalized Bessel function
of the first kind $\mathcal{W}_{p, b, c}$ defined for complex $z\in\mathbb{C}$ and $b,c,p\in\mathbb{C}$ by
\begin{equation}\label{eqn-3-bessel}
\mathcal{W}_{p, b, c}(z)=\displaystyle\sum_{k=0}^{\infty}\dfrac{(-1)^{k}c^{k}}{\Gamma(p+\frac{b+1}{2}+k) k!}\left(\dfrac{z}{2}\right)^{2k+p}.
\end{equation}
More details related to the function $\mathcal{W}_{p, b, c}$ and its particular cases can be found in
\cite{Baricz, saiful-bessel} and references therein. It is worth mentioning that, $\mathcal{W}_{p, 1, 1}= J_p$ is Bessel function
of order $p$ and $\mathcal{W}_{p, 1, -1}= I_p$ is modified Bessel function of order $p$. Also, $\mathcal{W}_{p, 2, 1}= 2 j_p/\sqrt{\pi}$ is spherical Bessel function
of order $p$ and $\mathcal{W}_{p, 2, -1}= 2 i_p/\sqrt{\pi}$ is modified spherical Bessel function of order $p$. Thus the study of the integral transform of $\mathcal{W}_{p, b, c}$ will gives far reaching results than the result in \cite{Kilbas-itsf, Purohit}.

The present article is organized as follow: in Section $\ref{sec:wright-fun}$ and Section $\ref{sec:hyper-series}$, composition of integral transforms
$(\ref{eqn-1-frac-opt})$ and $(\ref{eqn-2-frac-opt})$ with generalized Bessel function $(\ref{eqn-3-bessel})$
are given in terms of generalized Wright functions and generalized hypergeometric functions respectively.
Special  cases like $p= -b/2$ ($p= 1-b/2$) of $\mathcal{W}_{p, b, c}$ gives the composition of $(\ref{eqn-1-frac-opt})$ and $(\ref{eqn-2-frac-opt})$ with cosine  and hyperbolic cosine (sine and hyperbolic sine) functions,  are discussed  in Section $\ref{sec:trignometric}$. Some concluding remark and comparison with earlier known work are mentioned in Section $\ref{sec:conclusion}$.

Following two results given by Saigo {\it et. al.} \cite{saxena-saigo,maeda-saigo} are needed in sequel.
\begin{lemma}\label{lem-1}
Let $\alpha,\alpha',\beta,\beta',\gamma,\rho \in \mathbb{C}$ be such that
$\RM{(\gamma)}>0$ and  \[\RM{(\rho)}>\max\{0, \RM{(\alpha-\alpha'-\beta-\gamma)}, \RM{(\alpha'-\beta')}\}.\]
Then there exists the relation
\begin{align}\label{eqn-12-bessel}
\left(I_{0,+}^{\alpha,\alpha', \beta, \beta', \gamma} \; t^{\rho-1}\right)(x)
=\Gamma
\left[\begin{array}{lll}
\rho, \rho+\gamma-\alpha-\alpha'-\beta, \rho+\beta{'}-\alpha{'}\\
\rho+\beta{'}, \rho+\gamma-\alpha-\alpha{'}, \rho+\gamma-\alpha{'}-\beta
\end{array}\right] x^{\rho - \alpha - \alpha' + \gamma -1 },
\end{align}
where
\begin{align}\label{eqn-1-bessel}
\Gamma
\left[\begin{array}{lll}
a, b, c\\
d, e, f
\end{array}\right] = \frac{ \Gamma{(a)} \Gamma{(b)}  \Gamma{(c)}} { \Gamma{(d)} \Gamma{(e)}  \Gamma{(f)}}.
\end{align}
\end{lemma}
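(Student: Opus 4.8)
The plan is to reduce the claim to the evaluation of a single scaling constant and then compute it by a carefully ordered series expansion. Writing $t=xu$ in $(\ref{eqn-1-frac-opt})$ collects every power of $x$ into the prefactor $x^{\rho-\alpha-\alpha'+\gamma-1}$ predicted by the statement, so it suffices to show that
\[
\frac{1}{\Gamma(\gamma)}\int_0^1 (1-u)^{\gamma-1}u^{\rho-\alpha'-1}\,F_3\!\left(\alpha,\alpha',\beta,\beta';\gamma;1-u,1-\tfrac1u\right)du
\]
equals the stated $\Gamma$-quotient. First I would record the one-sided expansion
\[
F_3(\alpha,\alpha',\beta,\beta';\gamma;X,Y)=\sum_{m\ge 0}\frac{(\alpha)_m(\beta)_m}{(\gamma)_m\,m!}\,X^m\,{}_2F_1(\alpha',\beta';\gamma+m;Y),
\]
obtained from $(\gamma)_{m+n}=(\gamma)_m(\gamma+m)_n$; here the first argument $X=1-u\in(0,1)$ sits safely inside the unit disc.

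The main obstacle is that the second argument $Y=1-1/u$ ranges over $(-\infty,0)$, so the inner ${}_2F_1$, and the full double series for $F_3$, leaves its region of convergence as $u\to 0^+$. A naive term-by-term integration of the $F_3$ series produces individually divergent Beta integrals and, after formal analytic continuation, a genuinely wrong value; this is the trap to avoid. The device that removes the obstruction is Pfaff's transformation
\[
{}_2F_1\!\left(\alpha',\beta';\gamma+m;1-\tfrac1u\right)=u^{\alpha'}\,{}_2F_1\!\left(\alpha',\gamma+m-\beta';\gamma+m;1-u\right),
\]
which relocates the argument to $1-u\in(0,1)$ at the cost of a factor $u^{\alpha'}$. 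After this step every hypergeometric factor converges on the whole interval, the surviving integrand is $(1-u)^{\gamma+m+k-1}u^{\rho-1}$, and term-by-term integration is now legitimate.

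From there the computation is mechanical but must be run in the right order. I would integrate in $u$ to obtain $B(\rho,\gamma+m+k)=\Gamma(\rho)\Gamma(\gamma+m+k)/\Gamma(\rho+\gamma+m+k)$ (this uses $\RM(\rho)>0$), sum the resulting $k$-series by Gauss's theorem, whose convergence condition is exactly $\RM(\rho-\alpha'+\beta')>0$, i.e. $\RM(\rho)>\RM(\alpha'-\beta')$, and then sum the outer $m$-series by Gauss's theorem a second time, whose condition is exactly $\RM(\rho+\gamma-\alpha-\alpha'-\beta)>0$, i.e. $\RM(\rho)>\RM(\alpha-\alpha'-\beta-\gamma)$. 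Thus the three branches of the hypothesis $\RM(\rho)>\max\{0,\RM(\alpha-\alpha'-\beta-\gamma),\RM(\alpha'-\beta')\}$ are precisely the three integrability/summability conditions consumed along the way, which is a good internal consistency check on the strategy.

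The final step is bookkeeping: collapsing $\Gamma(\gamma+m)/(\gamma)_m=\Gamma(\gamma)$ and $\Gamma(\gamma+m+k)/(\gamma+m)_k=\Gamma(\gamma+m)$ at the appropriate moments, and cancelling the common factors $\Gamma(\rho+\gamma-\alpha')$ thrown off by the two Gauss evaluations. What remains is exactly $\Gamma(\rho)\,\Gamma(\rho+\beta'-\alpha')\,\Gamma(\rho+\gamma-\alpha-\alpha'-\beta)$ divided by $\Gamma(\rho+\beta')\,\Gamma(\rho+\gamma-\alpha-\alpha')\,\Gamma(\rho+\gamma-\alpha'-\beta)$, which is the asserted quotient $(\ref{eqn-12-bessel})$. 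I expect the Pfaff relocation to be the only genuinely delicate point; once both hypergeometric arguments lie in $(0,1)$, the two Gauss summations and the gamma simplification are routine.
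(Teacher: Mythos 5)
Your derivation is correct in all essentials, and the first thing to say is that the paper contains no proof of this lemma: it is imported verbatim from Saxena--Saigo and Saigo--Maeda, so there is no in-paper argument to compare against. Your route is, in substance, the classical one used in those sources: the substitution $t=xu$ isolates the power $x^{\rho-\alpha-\alpha'+\gamma-1}$; expanding $F_3$ over its first index and applying Pfaff's transformation sends the troublesome argument $1-1/u$ to $1-u\in(0,1)$, while the factor $u^{\alpha'}$ it produces exactly cancels the $u^{-\alpha'}$ of the kernel; the $u$-integral is then $B(\rho,\gamma+m+k)$, the $k$-sum is ${}_2F_1(\alpha',\gamma+m-\beta';\rho+\gamma+m;1)$ and the $m$-sum is ${}_2F_1(\alpha,\beta;\rho+\gamma-\alpha';1)$, both closed by Gauss's theorem; the factor $\Gamma(\rho+\gamma-\alpha')$ thrown off by the two evaluations cancels and precisely the stated quotient remains. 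Your observation that the three branches of the hypothesis are consumed one each by the Beta integral and the two Gauss sums is correct and is the right internal check.

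Two caveats. First, your ``i.e.''\ contains a sign slip: $\RM(\rho+\gamma-\alpha-\alpha'-\beta)>0$ is equivalent to $\RM(\rho)>\RM(\alpha+\alpha'+\beta-\gamma)$, not to $\RM(\rho)>\RM(\alpha-\alpha'-\beta-\gamma)$ as the lemma is printed; the hypothesis of Lemma~\ref{lem-1} as stated is evidently a typo (Theorem~\ref{thm-1} carries the correct form $\alpha+\alpha'+\beta-\gamma$), and your own convergence analysis is precisely the evidence for that, so you should say so rather than force the algebra to match the misprint. Second, ``every hypergeometric factor converges on the whole interval'' overstates the situation: the relocated ${}_2F_1(\alpha',\gamma+m-\beta';\gamma+m;1-u)$ can still grow like $u^{\beta'-\alpha'}$ as $u\to0^{+}$ when $\RM(\beta'-\alpha')<0$, so the interchange of summation and integration is justified not by convergence of the factors alone but by integrability of $u^{\rho+\beta'-\alpha'-1}$ near $0$ --- which is again the hypothesis $\RM(\rho)>\RM(\alpha'-\beta')$ --- followed by a Tonelli-type argument on the double sum. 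Neither point affects the final identity.
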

\begin{lemma}\label{lem-2}
Let $\alpha,\alpha',\beta,\beta',\gamma,\rho \in \mathbb{C}$ be such that
$\RM{(\gamma)}>0$ and  \[\RM{(\rho)}< 1+ \min\{\RM{(-\beta)}, \RM{(\alpha+\alpha'-\gamma)}, \RM{(\alpha+\beta'-\gamma)}\}.\]
Then there exists the relation
\begin{align}\label{eqn-13-bessel}
\left(I_{0,+}^{\alpha,\alpha', \beta, \beta', \gamma} \; t^{\rho-1}\right)(x)
=\Gamma\small{
\left[\begin{array}{lll}
 1-\rho-\gamma+\alpha+\alpha', 1-\rho+\alpha+\beta{'}, 1- \rho- \beta\\
1-\rho,  1-\rho+\alpha + \alpha'+\beta+ \beta'- \gamma, 1- \rho+ \alpha - \beta
\end{array}\right]} x^{\rho - \alpha - \alpha' + \gamma -1 }.
\end{align}
\end{lemma}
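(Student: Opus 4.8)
The plan is to mirror the derivation of Lemma~\ref{lem-1}, exploiting the scale behaviour of the operator in (\ref{eqn-2-frac-opt}). First I would substitute $f(t)=t^{\rho-1}$ into (\ref{eqn-2-frac-opt}) and make the change of variable $\tau=x/t$, which carries the ray $(x,\infty)$ onto the unit interval $(0,1)$. Collecting the powers of $x$ coming out of $(t-x)^{\gamma-1}$, $t^{-\alpha}$, $t^{\rho-1}$ and $dt$ produces exactly the announced factor $x^{\rho-\alpha-\alpha'+\gamma-1}$, so that the whole problem collapses to evaluating a single Euler-type integral of the Appell kernel $F_{3}$ against the weight $(1-\tau)^{\gamma-1}\tau^{\alpha-\gamma-\rho}$ over $(0,1)$. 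This integral is nothing but the \emph{Mellin transform} of the kernel evaluated at $\rho$, and it supplies the constant multiplying $x^{\rho-\alpha-\alpha'+\gamma-1}$.

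Next I would expand $F_{3}(\alpha,\alpha',\beta,\beta';\gamma;\cdot,\cdot)$ into its defining double power series and integrate term by term against that weight. In each term the Pochhammer symbol $(\gamma)_{m+n}$ cancels against the factor $\Gamma(\gamma+m+n)$ produced by the Beta integral $B(\cdot,\gamma+m+n)$, after which the surviving double sum factorises into a product of two Gauss series ${}_2F_1(\cdot,\cdot;\cdot;1)$. Summing each by the Gauss theorem ${}_2F_1(a,b;c;1)=\Gamma(c)\Gamma(c-a-b)/(\Gamma(c-a)\Gamma(c-b))$ and regrouping the six resulting Gamma factors should reproduce the quotient in (\ref{eqn-13-bessel}). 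The three inequalities packaged in $\RM(\rho)<1+\min\{\RM(-\beta),\RM(\alpha+\alpha'-\gamma),\RM(\alpha+\beta'-\gamma)\}$ are precisely the conditions $\RM(c-a-b)>0$ that legitimise the Gauss summations and that keep $\rho$ inside the fundamental strip of the Mellin transform.

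The hard part will be the non-compact endpoint $t\to\infty$. Unlike the situation in Lemma~\ref{lem-1}, where the integration runs over the bounded interval $(0,x)$, here one of the two arguments of the Appell kernel is unbounded on $(x,\infty)$, so the $F_{3}$ series does not converge uniformly on the domain and the interchange of summation and integration above is not literally justified; carried out naively it yields individually divergent, reflection-type integrals. The clean way around this is to regard (\ref{eqn-2-frac-opt}) as a Mellin convolution and to invoke the Mellin transform of the $F_{3}$ kernel, an explicit quotient of six Gamma functions whose strip of analyticity is exactly the region stated; equivalently, one first establishes (\ref{eqn-13-bessel}) on a subregion of the parameters where the kernel integral converges absolutely and then extends it to the full range by analytic continuation in $\rho$. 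Since this is precisely the computation carried out by Saigo and Maeda, the statement follows from \cite{maeda-saigo,saxena-saigo}.
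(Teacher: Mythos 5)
The paper never proves Lemma~\ref{lem-2}: both Lemma~\ref{lem-1} and Lemma~\ref{lem-2} are imported verbatim from Saigo--Maeda and Saxena--Saigo \cite{maeda-saigo,saxena-saigo}, so there is no internal proof to compare yours against, and your closing appeal to those same references already puts you on the same footing as the text. Your sketch of the underlying computation is a reasonable reconstruction of what those references do: substitute $f(t)=t^{\rho-1}$ into (\ref{eqn-2-frac-opt}), rescale by $\tau=x/t$, expand $F_3$ and integrate term by term, cancel $(\gamma)_{m+n}$ against the $\Gamma(\gamma+m+n)$ produced by the Beta integral, and finish with Gauss's theorem --- although the two summations are performed sequentially (sum over $m$ to a ${}_2F_1(1)$ depending on $n$, apply Gauss, then sum over $n$), not as a literal product of two independent Gauss series; and the three hypotheses on $\RM(\rho)$ are indeed the conditions $\RM(c-a-b)>0$ for those summations. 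Two small corrections. First, the unbounded-argument difficulty you single out is not special to the right-sided case: in (\ref{eqn-1-frac-opt}) the second argument $1-x/t$ also runs over $(-\infty,0)$ as $t\to 0^{+}$, so Lemma~\ref{lem-1} requires the same analytic-continuation (or absolute-convergence-then-continuation) device and your contrast between the two lemmas is illusory. Second, you correctly read the statement as concerning the right-sided operator of (\ref{eqn-2-frac-opt}) even though the display writes $I_{0,+}$; note also that the printed Gamma arguments $1-\rho+\alpha+\beta'$ and $1-\rho+\alpha+\alpha'+\beta+\beta'-\gamma$ are typos for $1-\rho+\alpha+\beta'-\gamma$ and $1-\rho+\alpha+\alpha'+\beta'-\gamma$, which is what your computation would actually produce and what the paper itself uses later in (\ref{eqn:bessel-2}).
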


\section{Representations  in term of generalized Wright functions}\label{sec:wright-fun}
In this section composition of integral transforms
$(\ref{eqn-1-frac-opt})$ and $(\ref{eqn-2-frac-opt})$ with generalized Bessel function $(\ref{eqn-3-bessel})$
are given in terms of the generalized Wright hypergeometric
function ${}_p\psi_q(z)$ which is defined by the series
\begin{equation}\label{eqn-9-bessel}
{}_p\psi_q(z)={}_p\psi_q\left[\begin{array}{c}
(a_i,\alpha_i)_{1,p} \\
(b_j,\beta_j)_{1,q}
\end{array}\bigg|z\right]=\displaystyle\sum_{k=0}^{\infty}
\dfrac{\prod_{i=1}^{p}\Gamma(a_{i}+\alpha_{i}k)}{\prod_{j=1}^{q}\Gamma(b_{j}+\beta_{j}k)}
\dfrac{z^{k}}{k!}.
\end{equation}
Here $a_i, b_j\in\mathbb{C}$, and  $\alpha_i, \beta_j\in\mathbb{R}$ ($i=1,2,\ldots,p;$ $j =1,2,\ldots,q$).
Asymptotic behavior of this function for large values of argument of $z\in {\mathbb{C}}$
were studied in \cite{C. Fox} and under the condition
\begin{equation}\label{eqn-10-bessel}
\displaystyle\sum_{j=1}^{q}\beta_{j}-\displaystyle\sum_{i=1}^{p}\alpha_{i}>-1
\end{equation}
in \cite{Wright-2,Wright-3}.
Properties of this generalized Wright function were investigated in \cite{Kilbas, Kilbas-itsf, Kilbas-frac}. In particular, it was proved \cite{Kilbas} that ${}_p\psi_{q}(z)$, $z\in {\mathbb{C}}$ is an
entire function under the condition ($\ref{eqn-10-bessel}$). Interesting results related to generalized Wright functions are also given in \cite{HMSri-Wright}.

\begin{theorem}\label{thm-1}
Let $\alpha,\alpha',\beta,\beta',\gamma,\rho, p, b, c \in \mathbb{C}$
such that $\kappa :=p+ (b+1)/2 \neq 0, -1, -2,\cdots$. Suppose that
$\RM{(\gamma)}>0$ and  $\RM{(\rho+ p)}>\max\{0, \RM{(\alpha+\alpha'+\beta-\gamma)}, \RM{(\alpha'-\beta')}\}.$
Then
\begin{align}\label{eqn-1:thm-1}\nonumber
&\left(I_{0,+}^{\alpha,\alpha',\beta,\beta',\gamma} \; t^{\rho-1}\mathcal{W}_{p, b, c}(t)\right )(x)
=\frac{x^{\rho+p-\alpha-\alpha'+\gamma-1}}{2^{p}}\hspace{5in}\\
&\hspace{.5in}\times{}_3\psi_4 \left[\begin{array}{lll}
(\rho+p,2) ,(\rho+p+\gamma-\alpha-\alpha'-\beta,2)(\rho+p+\beta{'}-\alpha{'},2);\\
(\rho+p+\beta{'},2),(\rho+p+\gamma-\alpha-\alpha{'},2),\\(\rho+p+\gamma-\alpha{'}-\beta,2),(\kappa,1)
\end{array}\bigg|-\frac{cx^2}{4}\right]\end{align}
\end{theorem}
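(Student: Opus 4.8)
The plan is to expand $\mathcal{W}_{p,b,c}$ into its defining power series (\ref{eqn-3-bessel}), apply the operator $I_{0,+}^{\alpha,\alpha',\beta,\beta',\gamma}$ to each monomial by Lemma \ref{lem-1}, and recognise the resulting series as a ${}_3\psi_4$. Setting $\kappa=p+(b+1)/2$, the definition gives
\[
t^{\rho-1}\mathcal{W}_{p,b,c}(t)=\sum_{k=0}^{\infty}\frac{(-1)^{k}c^{k}}{2^{2k+p}\,\Gamma(\kappa+k)\,k!}\,t^{\rho+p+2k-1},
\]
so, \emph{assuming} the interchange of summation and the integral operator is permissible, I would apply Lemma \ref{lem-1} with $\rho$ replaced by $\rho+p+2k$ to each term, obtaining a factor $x^{\rho+p+2k-\alpha-\alpha'+\gamma-1}$ multiplied by the corresponding ratio of six Gamma functions.

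The key point making the term-by-term computation legitimate is that the hypothesis $\RM(\rho+p)>\max\{0,\RM(\alpha+\alpha'+\beta-\gamma),\RM(\alpha'-\beta')\}$ is precisely the $k=0$ instance of the parameter restriction in Lemma \ref{lem-1} (after the substitution $\rho\mapsto\rho+p$); since the shift $+2k$ only increases the real part of the relevant exponent, the restriction then holds simultaneously for every $k\ge 0$. I would justify the interchange itself from the absolute and uniform convergence of the Bessel series on the compact $t$-intervals of integration together with $\RM(\gamma)>0$, so that a Fubini/dominated-convergence argument applies and the operator may be moved inside the sum. This convergence step is the only genuine analytic obstacle; everything after it is formal bookkeeping.

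Finally I would factor $x^{\rho+p-\alpha-\alpha'+\gamma-1}/2^{p}$ out of the series and absorb the remaining $x$- and $c$-dependence into $(-1)^{k}c^{k}x^{2k}/2^{2k}=(-cx^2/4)^{k}$. The general coefficient is then
\[
\frac{\Gamma(\rho+p+2k)\,\Gamma(\rho+p+\gamma-\alpha-\alpha'-\beta+2k)\,\Gamma(\rho+p+\beta'-\alpha'+2k)}{\Gamma(\rho+p+\beta'+2k)\,\Gamma(\rho+p+\gamma-\alpha-\alpha'+2k)\,\Gamma(\rho+p+\gamma-\alpha'-\beta+2k)\,\Gamma(\kappa+k)}\cdot\frac{(-cx^2/4)^{k}}{k!},
\]
which I would match against the definition (\ref{eqn-9-bessel}) of ${}_p\psi_q$: the three numerator factors supply the upper pairs $(\rho+p,2)$, $(\rho+p+\gamma-\alpha-\alpha'-\beta,2)$, $(\rho+p+\beta'-\alpha',2)$, while the three denominator factors together with $\Gamma(\kappa+k)$ supply the four lower pairs $(\rho+p+\beta',2)$, $(\rho+p+\gamma-\alpha-\alpha',2)$, $(\rho+p+\gamma-\alpha'-\beta,2)$, $(\kappa,1)$. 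Reading off $z=-cx^2/4$ yields exactly the ${}_3\psi_4$ in (\ref{eqn-1:thm-1}), completing the proof.
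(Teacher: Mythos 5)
Your proposal is correct and follows essentially the same route as the paper: expand $\mathcal{W}_{p,b,c}$ by its defining series, interchange the operator with the sum, apply Lemma \ref{lem-1} with $\rho$ replaced by $\rho+p+2k$ (noting that the parameter restriction, once satisfied at $k=0$, persists for all $k$ since the shift $+2k$ only increases the relevant real part), and then read off the resulting Gamma-quotient series as a ${}_3\psi_4$ with argument $-cx^2/4$. The only difference is that you spell out the Fubini/uniform-convergence justification for the interchange, which the paper merely asserts.
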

\begin{proof}
An application of integral transform $(\ref{eqn-1-frac-opt})$ to the generalized Bessel function $(\ref{eqn-3-bessel})$ leads to the formula
\begin{align}\label{eqn:2}
\left(I_{0,+}^{\alpha,\alpha',\beta,\beta',\gamma} t^{\rho-1}\mathcal{W}_{p, b, c}(t)\right )(x)
=\left(I_{0,+}^{\alpha,\alpha',\beta,\beta',\gamma}\sum\limits_{k=0}^{\infty}\frac{(-c)^k (1/2)^{2k+p}}{\Gamma(\kappa+k)k!}
t^{\rho+p+2k-1}\right)(x)
\end{align}
Now changing the order of integration and summation in  right hand side of $(\ref{eqn:2})$ yields
\begin{align}
\left(I_{0,+}^{\alpha,\alpha',\beta,\beta',\gamma} t^{\rho-1}\mathcal{W}_{p, b, c}(t)\right )(x)
=\sum\limits_{k=0}^{\infty}\frac{(-c)^k (1/2)^{2k+p}}{\Gamma(\kappa+k)k!}(I_{0,+}^{\alpha,\alpha{'},\beta,\beta',\gamma}
t^{\rho+p+2k-1})(x)
\end{align}
 Note that for all $k = 0, 1, 2, \ldots$,
\[\RM{(\rho+ p+ 2k)} \geq \RM{(\rho+ p)}>\max\{0, \RM{(\alpha-\alpha'-\beta-\gamma)}, \RM{(\alpha'-\beta')}\}.\]
Replacing  $\rho$ by $\rho+p+2k$ in Lemma $\ref{lem-1}$ and using $(\ref{eqn-12-bessel})$, we obtain
\begin{align}\label{eqn:thm-1-2} \nonumber
&\left(I_{0,+}^{\alpha,\alpha'\beta,\beta',\gamma} t^{\rho-1}\mathcal{W}_{p, b, c}(t)\right)(x)
=\frac{x^{\rho+p-\alpha-\alpha'+\gamma-1}}{2^{p}}\\
&{\tiny\times~\sum\limits_{k=0}^{\infty}
\Gamma
\left[\begin{array}{lll}
\rho+p+2k,\; \rho+p+\gamma-\alpha-\alpha{'}-\beta+2k,\; \rho+p+\beta{'}-\alpha{'}+2k\\
\rho+p+\beta{'}+2k,\; \rho+p+\gamma-\alpha-\alpha{'}+2k, \rho+p+\gamma-\alpha{'}-\beta+2k,\; \kappa+k
\end{array}\right]
\frac{1} {k!}\left(-\frac{cx^2}{4}\right)^k} .
\end{align}

Interpreting the right hand side of $(\ref{eqn:thm-1-2})$, the equality ($\ref{eqn-1:thm-1}$) can be obtained  from ($\ref{eqn-1-bessel}$) and then by using the  definition of generalized Wright function.
\end{proof}
\begin{theorem}\label{thm-2}
Let $\alpha,\alpha',\beta,\beta',\gamma,\rho, p, b, c \in \mathbb{C}$
such that $\kappa :=p+ (b+1)/2 \neq 0, -1, -2,\cdots$.
Suppose that
$\RM{(\gamma)}>0$ and  $\RM{(\rho- p)}< 1+ \min\{\RM{(-\beta)}, \RM{(\alpha+\alpha'-\gamma)}, \RM{(\alpha+\beta'-\gamma)}\}.$
Then
\begin{align*}
&\left(I_{0,-}^{\alpha,\alpha^{'},\beta,\beta',\gamma}\; t^{\rho-1}\mathcal{W}_{p, b, c}(1/t)\right )(x)=
\frac{x^{\rho-p-\alpha-\alpha^{'}+\gamma-1}}{2^{p}}\\
&\quad\times{}_3\psi_4 \tiny{\left[\begin{array}{lll}
(1-\rho+p-\gamma+\alpha+\alpha{'},2) ,(1-\rho+p+\alpha-\beta'-\gamma,2)(1-\rho+p-\beta,2);\\
(1-\rho+p,2),(1-\rho+p-\gamma+\alpha+\alpha{'}+\beta{'},2),(1-\rho+p+\alpha-\beta,2),(\kappa,k)
\end{array}\bigg|-\frac{c}{4x^2}\right]}
\end{align*}
\end{theorem}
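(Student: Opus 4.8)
The plan is to mirror the argument of Theorem~\ref{thm-1}, replacing the left-sided operator and Lemma~\ref{lem-1} by the right-sided operator $I_{0,-}$ and the companion power rule of Lemma~\ref{lem-2}. First I would insert the defining series $(\ref{eqn-3-bessel})$ evaluated at $1/t$, which gives
\[
t^{\rho-1}\mathcal{W}_{p,b,c}(1/t)=\sum_{k=0}^{\infty}\frac{(-c)^{k}(1/2)^{2k+p}}{\Gamma(\kappa+k)\,k!}\,t^{(\rho-p-2k)-1}.
\]
Every summand is a pure power $t^{s-1}$ with $s=\rho-p-2k$, which is exactly the form to which $I_{0,-}$ is applied in Lemma~\ref{lem-2}.

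The second step is to interchange $I_{0,-}$ with the summation and then apply Lemma~\ref{lem-2} termwise, replacing $\rho$ by $\rho-p-2k$. The admissibility hypothesis must be checked for every index: since $\RM(\rho-p-2k)=\RM(\rho-p)-2k\le\RM(\rho-p)$ for all $k\ge0$, the assumption $\RM(\rho-p)<1+\min\{\RM(-\beta),\RM(\alpha+\alpha'-\gamma),\RM(\alpha+\beta'-\gamma)\}$ guarantees the upper bound of Lemma~\ref{lem-2} simultaneously for all $k$, so the termwise evaluation is legitimate. Each of the six Gamma arguments of $(\ref{eqn-13-bessel})$ then becomes an affine function of $k$ with slope $2$, the factor $1/\Gamma(\kappa+k)$ supplies a seventh Gamma with slope $1$, and the power $x^{(\rho-p-2k)-\alpha-\alpha'+\gamma-1}$ factors as $x^{\rho-p-\alpha-\alpha'+\gamma-1}x^{-2k}$.

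Collecting the $k$-dependent constants via $(-c)^{k}(1/2)^{2k}x^{-2k}=\bigl(-c/(4x^{2})\bigr)^{k}$ and pulling out $(1/2)^{p}=2^{-p}$, the sum reorganizes, through the ratio notation $(\ref{eqn-1-bessel})$ and the definition $(\ref{eqn-9-bessel})$, into the asserted ${}_3\psi_4$ with argument $-c/(4x^{2})$. I expect the main obstacle to be the justification of the interchange of summation and integration: unlike in Theorem~\ref{thm-1}, the $I_{0,-}$ integral runs over the unbounded interval $[x,\infty)$ and is improper, so the swap needs an absolute-convergence or dominated-convergence estimate controlling the tail as $t\to\infty$, for which the constraint on $\RM(\rho-p)$ is precisely what secures integrability of every term. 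A minor concluding point is that the resulting series is entire, hence well defined, since in $(\ref{eqn-10-bessel})$ one has $\sum_{j}\beta_{j}-\sum_{i}\alpha_{i}=7-6=1>-1$.
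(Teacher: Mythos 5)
Your proposal follows essentially the same route as the paper's own proof: expand $\mathcal{W}_{p,b,c}(1/t)$ as a series of powers $t^{\rho-p-2k-1}$, interchange the operator with the sum, apply Lemma~\ref{lem-2} termwise with $\rho$ replaced by $\rho-p-2k$ (using $\RM(\rho-p-2k)\le\RM(\rho-p)$ to verify the hypothesis for every $k$), and reassemble the resulting Gamma quotients into a ${}_3\psi_4$ with argument $-c/(4x^2)$. The only additions beyond the paper's argument are your remarks on justifying the interchange over the unbounded interval and on the entirety of the resulting Wright series, which the paper passes over without comment.
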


\begin{proof}
Using $(\ref{eqn-2-frac-opt})$ and $(\ref{eqn-3-bessel})$, and then changing the order of integration and summation, which is justified under the conditions with Theorem $\ref{thm-2}$ yields
\begin{align}
\left(I_{0,-}^{\alpha,\alpha{'},\beta,\beta',\gamma}t^{\rho-1}\mathcal{W}_{p, b, c}(1/t)\right )(x)
=\sum\limits_{k=0}^{\infty}\frac{(-c)^k (1/2)^{2k+p}}{\Gamma(\kappa+k)k!}(I_{0,-}^{\alpha,\alpha{'},\beta,\beta',\gamma}
t^{\rho-p-2k-1})(x).
\end{align}
 Note that for all $k = 0, 1, 2, \ldots$,
\[\RM{(\rho-p-2k)} \leq \RM{(\rho- p)}< 1+ \min\{\RM{(-\beta)}, \RM{(\alpha+\alpha'-\gamma)}, \RM{(\alpha+\beta'-\gamma)}\}.\]
Hence replacing  $\rho$ by $\rho-p-2k$ in Lemma $\ref{lem-2}$ and using $(\ref{eqn-13-bessel})$, we obtain
\begin{align}\label{eqn:bessel-2}\nonumber
&\left(I_{0,-}^{\alpha,\alpha{'},\beta,\beta',\gamma} t^{\rho-1}\mathcal{W}_{p, b, c}(1/t)\right)(x)
=\frac{x^{\rho-p-\alpha-\alpha{'}+\gamma-1}}{2^{p}}\\
&\quad\quad\times{\tiny{\sum\limits_{k=0}^{\infty}\Gamma
\left[\begin{array}{lll}
1-\rho+p-\gamma+\alpha+\alpha{'}+2k, 1-\rho+p+\alpha+\beta{'}-\gamma+2k, 1-\rho+p-\beta+2k\\
1-\rho+p+2k, 1-\rho+p-\gamma+\alpha+\alpha{'}+\beta{'}+2k, 1-\rho+p+\alpha-\beta+2k
\end{array}\right]
\frac{1} {k!}\left(-\frac{c}{4x^2}\right)^k.}}
\end{align}
Now  $(\ref{eqn-1-bessel})$, ($\ref{eqn-9-bessel}$) and $(\ref{eqn:bessel-2})$ together imply that
\begin{align*}
&\left(I_{0,-}^{\alpha,\alpha^{'},\beta,\beta',\gamma}\; t^{\rho-1}\mathcal{W}_{p, b, c}(1/t)\right )(x)=
\frac{x^{\rho-p-\alpha-\alpha^{'}+\gamma-1}}{2^{p}}\\
&\quad\times{}_3\psi_4 \tiny{\left[\begin{array}{lll}
(1-\rho+p-\gamma+\alpha+\alpha{'},2) ,(1-\rho+p+\alpha-\beta'-\gamma,2)(1-\rho+p-\beta,2);\\
(1-\rho+p,2),(1-\rho+p-\gamma+\alpha+\alpha{'}+\beta{'},2),(1-\rho+p+\alpha-\beta,2),(p,k)
\end{array}\bigg|-\frac{c}{4x^2}\right],}
\end{align*}
and this complete the proof.
\end{proof}

\section{Representation  in term of generalized hypergeometric series}\label{sec:hyper-series}
The generalized hypergeometric function ${}_pF_q(a_1,\ldots, a_p;c_1,\ldots,c_q;z)$ is  given by
the representation
\begin{equation}\label{eqn:gn-hyper}
{}_pF_q(a_1,\ldots, a_p;c_1,\ldots,c_q;z) = \sum_{k=0}^{\infty}\dfrac{(a_1)_{k}\cdots
(a_p)_{k}}{(c_1)_{k}\cdots(c_q)_{k}(1)_{k}}z^k,\quad (z\in {\mathbb{C}}),
\end{equation}
where none of the denominator parameters is zero or a negative integer. Here
 $p$ or $q$  are allowed to be zero. The series $(\ref{eqn:gn-hyper})$
is convergent for all finite $z$ if $p \leq q$, while for $p=q+1$, it is convergent
for $|z|<1$ and divergent for $|z|>1$.

Results obtain in this section demonstrate the image formula for the generalized Bessel functions $\mathcal{W}_{p,b,c}$
under the operator $(\ref{eqn-1-frac-opt})$ and $(\ref{eqn-2-frac-opt})$ in terms of generalized hypergeometric
functions.  The well known Legendre duplication formula \cite{Erdélyi-1} given by
\begin{equation}\label{eqn-31-bessel}
\Gamma(2z)=\dfrac{2^{2z-1}}{\sqrt{\pi}}\Gamma(z)\Gamma\left(z+\frac{1}{2}\right)
\end{equation}
and
\begin{equation}\label{eqn-32-bessel}
(z)_{2k}=2^{2k}\left(\frac{z}{2}\right)_{k}\left(\frac{z+1}{2}\right)_{k}, \quad\quad (k\in\mathbb{N}_{0})
\end{equation}
are required for this purpose.
\begin{theorem}\label{thm-3}
Let $\alpha,\alpha',\beta,\beta',\gamma,\rho, p, b, c \in \mathbb{C}$
such that $\kappa :=p+ (b+1)/2 \neq  -1, -2,\cdots$. Suppose that
$\RM{(\gamma)}>0$ and  $\RM{(\rho+ p)}>\max\{0, \RM{(\alpha-\alpha'-\beta-\gamma)}, \RM{(\alpha'-\beta')}\}.$
Then there hold formula:
\begin{align*}
&\left(I_{0,+}^{\alpha,\alpha^{'},\beta,\beta',\gamma}\; t^{\rho-1}\mathcal{W}_{p, b, c}(t)\right )(x)\\&=
\frac{x^{\rho+p-1}}{2^{p}}\frac{\Gamma(\rho+p)\Gamma(\rho+p+\gamma-\alpha-\alpha{'}-\beta)}
{\Gamma(\rho+p+\beta{'})\Gamma(\rho+p+\gamma-\alpha-\alpha{'})\Gamma(\kappa)}
\frac{\Gamma(\rho+p+\beta{'}-\alpha{'})}
{\Gamma(\rho+p+\gamma-\alpha{'}-\beta)}\\
&\times{}_6F_7\small{\left[\begin{array}{lll}
\frac{\rho+p}{2},\frac{\rho+p+1}{2},\frac{\rho+p+\gamma-\alpha-\alpha{'}-\beta}{2},
\frac{\rho+p-\alpha-\alpha{'}+\beta+1}{2},
\frac{\rho+p+\beta{'}-\alpha{'}}{2},\frac{\rho+p+\beta{'}-\alpha{'}+1}{2};\\
\kappa,\frac{\rho+p+\beta{'}}{2},\frac{\rho+p+\beta{'}+1}{2},\frac{\rho+p+
\gamma-\alpha-\alpha{'}}{2},
\frac{\rho+p+\gamma-\alpha-\alpha{'}+1}{2},\\
\frac{\rho+p+\gamma-\alpha{'}-\beta}{2},\frac{\rho+p+\gamma-\alpha{'}-\beta+1}{2}
\end{array}\bigg|-\frac{cx^2}{4}\right]}
\end{align*}
\end{theorem}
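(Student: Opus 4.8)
The plan is to read off the ${}_6F_7$ representation directly from the series already produced in the proof of Theorem \ref{thm-1}, since the left-hand side here is the very same composition $\left(I_{0,+}^{\alpha,\alpha',\beta,\beta',\gamma}\,t^{\rho-1}\mathcal{W}_{p,b,c}(t)\right)(x)$, under the same hypotheses. Thus I would begin from the intermediate expansion (\ref{eqn:thm-1-2}), which writes the image as $\frac{x^{\rho+p-\alpha-\alpha'+\gamma-1}}{2^{p}}$ times a sum over $k$ of a quotient of Gamma functions of the form $\Gamma(a+2k)$ (three in the numerator, three in the denominator) together with the single factor $1/\Gamma(\kappa+k)$, all weighted by $(-cx^2/4)^k/k!$. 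The whole task is then to recast this $\Gamma$-quotient series, whose shifts are by $2k$, into a series in ordinary Pochhammer symbols $(\cdot)_k$, which is exactly the generalized hypergeometric shape (\ref{eqn:gn-hyper}).

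The key mechanical step uses the quadratic Pochhammer identity (\ref{eqn-32-bessel}). For each quadratic factor I would write $\Gamma(a+2k)=\Gamma(a)\,(a)_{2k}$ and then split $(a)_{2k}=2^{2k}\left(\tfrac{a}{2}\right)_k\left(\tfrac{a+1}{2}\right)_k$, while the isolated factor is treated by $\Gamma(\kappa+k)=\Gamma(\kappa)(\kappa)_k$. The constant values $\Gamma(a)$ and $1/\Gamma(\kappa)$ then collect into the prefactor displayed in the statement, and each $\Gamma(a+2k)$ contributes the two half-shifted symbols $\left(\tfrac{a}{2}\right)_k$ and $\left(\tfrac{a+1}{2}\right)_k$. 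A crucial bookkeeping observation is that the three numerator factors each produce a $2^{2k}$ and the three denominator quadratic factors each produce a $2^{2k}$, so these powers of two cancel in full and leave the argument of the series unchanged at $-cx^2/4$; no residual power of two survives to rescale it. (The Legendre duplication formula (\ref{eqn-31-bessel}) could be used equivalently, but (\ref{eqn-32-bessel}) is the cleaner tool here.)

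Collecting terms, the numerator of the new series carries the six symbols arising from the pairs $\{\tfrac{\rho+p}{2},\tfrac{\rho+p+1}{2}\}$, $\{\tfrac{\rho+p+\gamma-\alpha-\alpha'-\beta}{2},\tfrac{\rho+p+\gamma-\alpha-\alpha'-\beta+1}{2}\}$ and $\{\tfrac{\rho+p+\beta'-\alpha'}{2},\tfrac{\rho+p+\beta'-\alpha'+1}{2}\}$, while the denominator carries the analogous six half-shifted symbols from the three lower $\Gamma$-factors together with the single symbol $(\kappa)_k$, giving seven lower parameters in all; the leftover $1/k!$ supplies the $(1)_k$. This is precisely a ${}_6F_7$ at $-cx^2/4$, which produces the claimed formula, with the power of $x$ and the Gamma prefactor inherited unchanged from (\ref{eqn:thm-1-2}). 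The step I expect to demand the most care is organizational rather than analytic: tracking which shifted arguments pair as $\tfrac{a}{2}$ and $\tfrac{a+1}{2}$, confirming the complete cancellation of the $2^{2k}$ factors, and verifying that the resulting constant prefactor and power of $x$ match the statement. No new convergence or interchange argument is required, since these were already justified in the proof of Theorem \ref{thm-1}, and the hypothesis on $\kappa$ is only needed so that the lower parameter $\kappa$ is admissible in the ${}_6F_7$.
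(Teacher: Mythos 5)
Your proposal matches the paper's own proof essentially step for step: both start from the intermediate expansion (\ref{eqn:thm-1-2}), write each $\Gamma(a+2k)$ as $\Gamma(a)(a)_{2k}$ and $\Gamma(\kappa+k)$ as $\Gamma(\kappa)(\kappa)_k$, apply the quadratic identity (\ref{eqn-32-bessel}) to split the $(a)_{2k}$ into half-shifted Pochhammer pairs with the $2^{2k}$ factors cancelling, and read off the ${}_6F_7$. Your derivation is correct as stated (and in fact yields $x^{\rho+p-\alpha-\alpha'+\gamma-1}$ and the numerator parameter $\tfrac{\rho+p+\gamma-\alpha-\alpha'-\beta+1}{2}$, which the printed theorem statement garbles slightly, evidently by typographical error).
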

\begin{proof}
It is known that $\Gamma(z+k)= \Gamma(z) (z)_k$. Thus
\begin{align*}
&\Gamma
\left[\begin{array}{lll}
\rho+p+2k,\; \rho+p+\gamma-\alpha-\alpha{'}-\beta+2k,\; \rho+p+\beta{'}-\alpha{'}+2k\\
\rho+p+\beta{'}+2k,\; \rho+p+\gamma-\alpha-\alpha{'}+2k,\; \rho+p+\gamma-\alpha{'}-\beta+2k,\; \kappa+k
\end{array}\right]\\
&= \frac{\Gamma(\rho+p)\Gamma(\rho+p+\gamma-\alpha-\alpha{'}-\beta) \Gamma(\rho+p+\beta{'}-\alpha{'})}
{\Gamma(\rho+p+\beta{'})\Gamma(\rho+p+\gamma-\alpha-\alpha{'})\Gamma(\rho+p+\gamma-\alpha{'}-\beta)\Gamma(\kappa)}\\
& \quad \quad \times \frac{ (\rho+p)_{2k} (\rho+p+\gamma-\alpha-\alpha{'}-\beta)_{2k} (\rho+p+\beta{'}-\alpha{'})_{2k}} {(\rho+p+\beta{'})_{2k} (\rho+p+\gamma-\alpha-\alpha{'})_{2k} (\rho+p+\gamma-\alpha{'}-\beta)_{2k} (\kappa)_{k}}.
\end{align*}
Now apply $(\ref{eqn-32-bessel})$ on the right hand side of above equation and then the result follows from
$(\ref{eqn:thm-1-2})$. This complete the proof.
\end{proof}

By adopting similar method next result can be obtained  from $(\ref{eqn:bessel-2})$, we omit the details.
\begin{theorem}\label{thm-4}
Let $\alpha,\alpha',\beta,\beta',\gamma,\rho, p, b, c \in \mathbb{C}$
such that $\kappa :=p+ (b+1)/2 \neq 0, -1, -2,\cdots$.
Suppose that
$\RM{(\gamma)}>0$ and  $\RM{(\rho- p)}< 1+ \min\{\RM{(-\beta)}, \RM{(\alpha+\alpha'-\gamma)}, \RM{(\alpha+\beta'-\gamma)}\}.$
Then
\begin{align*}
&\left(I_{0,+}^{\alpha,\alpha^{'},\beta,\beta',\gamma}\; t^{\rho-1}\mathcal{W}_{p, b, c}(1/t)\right )(x)\\
&=\frac{x^{\rho-p-\alpha-\alpha{'}+\gamma-1}}{2^{p}}
\frac{\Gamma(\alpha+\alpha{'}+p-\gamma-\rho+1)\Gamma(\alpha+\beta{'}-\gamma+p-\rho+1)\Gamma(-\beta+p-\rho+1)}
{\Gamma(p-\rho+1)\Gamma(\kappa)\Gamma(\alpha+\alpha{'}
+\beta{'}+p-\gamma-\rho+1)\Gamma(\alpha-\beta+p-\rho+1)}\\
&\times{}_6F_7 \left[\begin{array}{lll}
\frac{\alpha+\alpha{'}+p-\gamma-\rho+1}{2},\frac{\alpha+\alpha{'}+p+\gamma-\rho+2}{2},\frac{\alpha+\beta{'}+p-\gamma-\rho+1}{2}, \frac{\alpha+\beta{'}+p-\gamma-\rho+2}{2},\\ \frac{-\beta+p-\rho+1}{2},\frac{-\beta+p-\rho+2}{2};\\
\kappa,\frac{p-\rho+1}{2},\frac{p-\rho+2}{2},\frac{\alpha+\alpha{'}+\beta{'}
+p-\gamma-\rho+1}{2},\frac{\alpha+\alpha{'}+\beta{'}+p-\gamma-\rho+2}{2},\\
\frac{\alpha-\beta+p-\rho+1}{2},\frac{\alpha-\beta+p-\rho+2}{2}
\end{array}\bigg|-\frac{c}{4x^2}\right].
\end{align*}
\end{theorem}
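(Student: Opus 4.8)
The plan is to follow the proof of Theorem \ref{thm-3} step for step, replacing its starting point \ref{eqn:thm-1-2} by the analogous series \ref{eqn:bessel-2} produced in the course of proving Theorem \ref{thm-2}. Thus I would take \ref{eqn:bessel-2} as given, reading its summand as a gamma quotient $G_k$ (carrying in its denominator the factor $1/\Gamma(\kappa+k)$ inherited from the defining series \ref{eqn-3-bessel}) multiplied by $(-c/4x^2)^k/k!$ and by the prefactor $x^{\rho-p-\alpha-\alpha'+\gamma-1}/2^{p}$.

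First I would use $\Gamma(z+2k)=\Gamma(z)(z)_{2k}$ on each of the three upper and three lower entries bearing a $+2k$, together with $\Gamma(\kappa+k)=\Gamma(\kappa)(\kappa)_k$, to pull the $k$-independent part out of the sum. This extracted factor is precisely the gamma-function ratio displayed in the statement, and it leaves a residual series whose $k$-th term is $(z_1)_{2k}(z_2)_{2k}(z_3)_{2k}$ divided by $(w_1)_{2k}(w_2)_{2k}(w_3)_{2k}(\kappa)_k$, times $(-c/4x^2)^k/k!$, where $z_i,w_i$ denote the six arguments appearing in \ref{eqn:bessel-2}.

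The decisive step is then the duplication identity \ref{eqn-32-bessel}, namely $(z)_{2k}=2^{2k}(z/2)_k((z+1)/2)_k$, applied to each of the six Pochhammer symbols $(z_i)_{2k}$ and $(w_i)_{2k}$. Every split produces a pair of unit-step Pochhammers $\tfrac z2,\tfrac{z+1}2$, so the three numerator factors deliver the six upper parameters and the three denominator factors the six lower parameters of the ${}_6F_7$; appending the untouched $(\kappa)_k$ brings the lower count to seven, while $(1)_k=k!$ supplies the hypergeometric normalization and the argument remains $-c/(4x^2)$. The one genuinely fiddly point is the power-of-two bookkeeping: the duplication formula contributes a factor $2^{2k}$ from each of the three numerator and three denominator Pochhammers, so that $2^{6k}$ cancels $2^{6k}$ exactly, the lone $(\kappa)_k$ (having step $k$ rather than $2k$) contributing no such factor. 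Matching the resulting series against the definition \ref{eqn:gn-hyper} then yields the stated ${}_6F_7$. No analytic difficulty intervenes, since the hypotheses are exactly those of Theorem \ref{thm-2} and the inequality $\RM(\rho-p-2k)\le\RM(\rho-p)$ for all $k\ge0$ keeps Lemma \ref{lem-2} and the termwise interchange valid throughout.
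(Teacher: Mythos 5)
Your proposal is correct and is exactly the argument the paper intends: the paper explicitly omits the proof of Theorem \ref{thm-4}, stating only that it follows from \eqref{eqn:bessel-2} by the same method as Theorem \ref{thm-3}, and your step-by-step transcription (factoring each $\Gamma(z+2k)=\Gamma(z)(z)_{2k}$ and $\Gamma(\kappa+k)=\Gamma(\kappa)(\kappa)_k$, then splitting via \eqref{eqn-32-bessel} with the $2^{6k}$ factors cancelling) is that method. Your attention to the $1/\Gamma(\kappa+k)$ factor suppressed in the display of \eqref{eqn:bessel-2} and to the monotonicity $\RM(\rho-p-2k)\le\RM(\rho-p)$ justifying the interchange is, if anything, more careful than the source.
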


\section{Fractional integration of trigonometric functions}\label{sec:trignometric}
\subsection{Cosine and hyperbolic cosine functions} For all $b\in\mathbb{C}$, if $p=-b/2$, then the generalized Bessel function $\mathcal{W}_{p, b, c}(z)$ have the form
\begin{align}\label{eqn:cos-0}
\mathcal{W}_{-\frac{b}{2}, b, c^{2}}(z)=\left(\frac{2}{z}\right)^{\frac{b}{2}}\frac{\cos c z}{\sqrt{\pi}} \quad
\text{and} \quad
\mathcal{W}_{-\frac{b}{2}, b, - c^{2}}(z)=\left(\frac{2}{z}\right)^{\frac{b}{2}}\frac{\cosh c z}{\sqrt{\pi}}.
\end{align}
Hence following results are a consequence of Theorem $\ref{thm-1}$ and Theorem $\ref{thm-2}$ respectively.
\begin{corollary}
Let $\alpha,\alpha',\beta,\beta',\gamma,\rho,  c \in \mathbb{C}$
such that
$\RM{(\gamma)}>0$ and  \[\RM{(\rho)}>\max\{0, \RM{(\alpha-\alpha'-\beta-\gamma)}, \RM{(\alpha'-\beta')}\}.\]
Then
\begin{align}\label{eqn:cos-1}\nonumber
&\left(I_{0,+}^{\alpha,\alpha{'},\beta,\beta',\gamma}\; t^{\rho-1}cos(ct)\right )(x)=\pi^{\frac{1}{2}}x^{\rho-\alpha-\alpha{'}+\gamma-1}
\\&\quad\quad \times{}_3\psi_4\small{\left[\begin{array}{lll}
(\rho,2),(\rho+\gamma-\alpha-\alpha{'}-\beta,2),(\rho+\beta{'}-\alpha{'},2);\\
(\rho+\beta{'},2),(\rho+\gamma-\alpha-\alpha{'},2),(\rho+\gamma-\alpha{'}-\beta,2),(\frac{1}{2},1)
\end{array}\bigg|-\frac{c^2x^2}{4}\right]}\hfill
\end{align}
and
\begin{align}\label{eqn:cos-2}\nonumber
&\left(I_{0,+}^{\alpha,\alpha{'},\beta,\beta',\gamma}\; t^{\rho-1}\cosh(ct)\right )(x)=\pi^{\frac{1}{2}}x^{\rho-\alpha-\alpha{'}+\gamma-1}
\\
&\quad\quad\times{}_3\psi_4\small{\left[\begin{array}{lll}
(\rho,2),(\rho+\gamma-\alpha-\alpha{'}-\beta,2),(\rho+\beta{'}-\alpha{'},2);\\
(\rho+\beta{'},2),(\rho+\gamma-\alpha-\alpha{'},2),(\rho+\gamma-\alpha{'}-\beta,2),(\frac{1}{2},1)
\end{array}\bigg|\frac{c^2x^2}{4}\right]}.
\end{align}
\end{corollary}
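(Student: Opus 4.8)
The plan is to obtain both identities as specializations of Theorem \ref{thm-1}, taking $p=-b/2$ and using the closed forms recorded in \eqref{eqn:cos-0} to rewrite the trigonometric integrand as a generalized Bessel function. The hyperbolic case will come out of the same computation by flipping the sign of the third Bessel parameter.

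First I would invert the first relation in \eqref{eqn:cos-0} to write
\[
t^{\rho-1}\cos(ct)=\sqrt{\pi}\,2^{-b/2}\,t^{(\rho+b/2)-1}\,\mathcal{W}_{-b/2,\,b,\,c^2}(t),
\]
which displays the integrand as a constant multiple of $t^{\tilde\rho-1}\mathcal{W}_{p,b,c^2}(t)$ with $p=-b/2$ and the shifted parameter $\tilde\rho=\rho+b/2$. Applying Theorem \ref{thm-1} to this shifted problem, the pivotal simplification is that the combination $\tilde\rho+p$ that pervades every parameter of the ${}_3\psi_4$ collapses to $\tilde\rho+p=(\rho+b/2)+(-b/2)=\rho$, while the index reduces to $\kappa=p+(b+1)/2=1/2$. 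Hence each Wright parameter in \eqref{eqn-1:thm-1} sheds its dependence on $p$ and becomes exactly the entry displayed in \eqref{eqn:cos-1}, with last lower parameter $(\kappa,1)=(1/2,1)$.

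Next I would collect the constant factors. Theorem \ref{thm-1} supplies $2^{-p}=2^{b/2}$ together with $x^{\tilde\rho+p-\alpha-\alpha'+\gamma-1}=x^{\rho-\alpha-\alpha'+\gamma-1}$; multiplying by the external $\sqrt{\pi}\,2^{-b/2}$ cancels the two powers of $2$ and leaves the prefactor $\pi^{1/2}x^{\rho-\alpha-\alpha'+\gamma-1}$. Since the third Bessel parameter is now $c^2$, the generic Wright argument $-cx^2/4$ becomes $-c^2x^2/4$, and \eqref{eqn:cos-1} follows. For the hyperbolic case I would repeat the argument verbatim starting from the second relation in \eqref{eqn:cos-0}; the sole change is that the third Bessel parameter is $-c^2$, which flips the argument to $+c^2x^2/4$ and yields \eqref{eqn:cos-2}.

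The computation is essentially bookkeeping, so the care needed concentrates in three places: confirming that the factor $(t/2)^{b/2}$ induces precisely the shift $\rho\mapsto\rho+b/2$, so that $\tilde\rho+p=\rho$ and the powers of $2$ cancel cleanly; checking that the hypothesis $\RM(\tilde\rho+p)>\max\{\cdots\}$ of Theorem \ref{thm-1} transcribes into the displayed condition on $\RM(\rho)$; and noting that the admissibility requirement $\kappa\neq 0,-1,-2,\ldots$ holds automatically here since $\kappa=1/2$, so that no extra restriction on $b$ is incurred. I expect no genuine obstacle beyond this tracking of parameters.
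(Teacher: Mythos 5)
Your proposal is correct and follows essentially the same route as the paper: both specialize Theorem \ref{thm-1} with $p=-b/2$ and $c\mapsto c^{2}$ (resp. $-c^{2}$) via \eqref{eqn:cos-0}, the only cosmetic difference being that you build the shift $\rho\mapsto\rho+b/2$ into the integrand at the outset while the paper performs the same substitution at the end. The parameter bookkeeping ($\tilde\rho+p=\rho$, $\kappa=1/2$, cancellation of the powers of $2$) matches the paper's computation.
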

\begin{proof}
On setting $p=-b/2$ and replacing $c$ by $c^2$ in to ($\ref{eqn-1:thm-1}$) and using $(\ref{eqn:cos-0})$, we have
\begin{align*}
&\left(I_{0,+}^{\alpha,\alpha',\beta,\beta',\gamma} \; t^{\rho-1}\left(\frac{2}{t}\right)^{\frac{b}{2}}\frac{\cos (c t)}{\sqrt{\pi}}\right )(x)
=\frac{x^{\rho-\frac{b}{2}-\alpha-\alpha'+\gamma-1}}{2^{-\frac{b}{2}}}\hspace{5in}\\
&\hspace{.2in}\times{}_3\psi_4 {\small\left[\begin{array}{lll}
(\rho-\frac{b}{2},2) ,(\rho-\frac{b}{2}+\gamma-\alpha-\alpha'-\beta,2)(\rho-\frac{b}{2}+\beta{'}-\alpha{'},2);\\
(\rho-\frac{b}{2}+\beta{'},2),(\rho-\frac{b}{2}+\gamma-\alpha
-\alpha{'},2),(\rho-\frac{b}{2}+\gamma-\alpha{'}-\beta,2),(\frac{1}{2},1)
\end{array}\bigg|-\frac{cx^2}{4}\right]}.\end{align*}
This implies
\begin{align}\label{eqn:cos-3}\nonumber
&\left(I_{0,+}^{\alpha,\alpha',\beta,\beta',\gamma} \; t^{\rho-\frac{b}{2}-1} \cos c t\right )(x)
= \pi^{\frac{1}{2} }x^{\rho-\frac{b}{2}-\alpha-\alpha'+\gamma-1} \hspace{5in}\\
&\hspace{.2in}\times{}_3\psi_4 {\small\left[\begin{array}{lll}
(\rho-\frac{b}{2},2) ,(\rho-\frac{b}{2}+\gamma-\alpha-\alpha'-\beta,2)(\rho-\frac{b}{2}+\beta{'}-\alpha{'},2);\\
(\rho-\frac{b}{2}+\beta{'},2),(\rho-\frac{b}{2}+\gamma-\alpha-\alpha{'},2),
(\rho-\frac{b}{2}+\gamma-\alpha{'}-\beta,2),(\frac{1}{2},1)
\end{array}\bigg|-\frac{cx^2}{4}\right]}.\end{align}
The identity $(\ref{eqn:cos-1})$ follows from $(\ref{eqn:cos-3})$ by replacing $\rho$ by $\rho+\frac{b}{2}$.

Similarly, the identity  $(\ref{eqn:cos-2})$ can be obtained from ($\ref{eqn-1:thm-1}$) by  setting $p=-b/2$ and replacing $c$ by $- c^2$.
\end{proof}

\begin{corollary}

Let $\alpha,\alpha{'},\beta,\beta{'},\gamma,\rho\in\mathbb{C}$ be such that $\RM{(\gamma)}>0$, and
\[\RM{(\rho)}<\min\left\{\RM(-\beta), \RM(\alpha+\alpha{'}-\gamma), \RM(\alpha+\beta{'}-\gamma)\right\}.\]
Then
\begin{align*}
&\left(I_{0,+}^{\alpha,\alpha^{'},\beta,\beta',\gamma}\; t^{\rho-1}\cos\left(\dfrac{c}{t}\right)\right )(x)=\pi^{\frac{1}{2}}x^{\rho-\alpha-\alpha{'}+\gamma}
\\&\quad\quad\times{}_3\psi_4 \left[\begin{array}{lll}
(\rho-\gamma+\alpha+\alpha{'},2),(-\rho+\alpha+\beta{'}-\gamma,2),(-\rho-\beta,2);\\
(-\rho,2),(-\rho-\gamma+\alpha+\alpha{'}+\beta{'},2),(-\rho+\alpha-\beta,2),(\frac{1}{2},1)
\end{array}\bigg|-\frac{c^2}{4x^2}\right]\end{align*}
and
\begin{align*}
&\left(I_{0,+}^{\alpha,\alpha^{'},\beta,\beta',\gamma}\; t^{\rho}\cosh\left(\dfrac{c}{t}\right)\right )(x)=\pi^{\frac{1}{2}}x^{\rho-\alpha-\alpha{'}+\gamma}
\\&\quad\quad\quad\times{}_3\psi_4 \left[\begin{array}{lll}
(\rho-\gamma+\alpha+\alpha{'},2),(-\rho+\alpha+\beta{'}-\gamma,2),(-\rho-\beta,2);\\
(-\rho,2),(-\rho-\gamma+\alpha+\alpha{'}+\beta{'},2),(-\rho+\alpha-\beta,2),(\frac{1}{2},1)
\end{array}\bigg|\frac{c^2}{4x^2}\right].\end{align*}
\end{corollary}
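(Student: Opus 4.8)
The plan is to read both formulas off Theorem \ref{thm-2} by the specialization $p=-b/2$, exactly as the preceding corollary was extracted from Theorem \ref{thm-1}. Two facts make this work. First, $p=-b/2$ sends $\kappa=p+(b+1)/2$ to $1/2$, which is precisely the entry $(1/2,1)$ occupying the final denominator slot of the claimed ${}_3\psi_4$. Second, evaluating the reduction (\ref{eqn:cos-0}) at the reciprocal argument $z=1/t$ yields $\mathcal{W}_{-b/2,b,c^2}(1/t)=(2t)^{b/2}\cos(c/t)/\sqrt{\pi}$ and $\mathcal{W}_{-b/2,b,-c^2}(1/t)=(2t)^{b/2}\cosh(c/t)/\sqrt{\pi}$, so the two trigonometric kernels arise from one master identity through the two sign choices of the Bessel parameter $c$.

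First I would write Theorem \ref{thm-2} with its running parameter renamed $\sigma$, set $p=-b/2$, and replace $c$ by $c^2$. The left member then carries $t^{\sigma-1}\mathcal{W}_{-b/2,b,c^2}(1/t)=2^{b/2}\pi^{-1/2}\,t^{\sigma-1+b/2}\cos(c/t)$, while the right member has prefactor $2^{b/2}x^{\sigma+b/2-\alpha-\alpha'+\gamma-1}$ and Wright argument $-c^2/(4x^2)$. Cancelling $2^{b/2}$ and transferring $\pi^{-1/2}$ gives a formula for $\bigl(I_{0,-}^{\alpha,\alpha',\beta,\beta',\gamma}\,t^{\sigma-1+b/2}\cos(c/t)\bigr)(x)$ with an overall factor $\pi^{1/2}$. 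I would then choose the shift $\sigma=\rho+1-b/2$, which normalizes the power of $t$ to $\rho$ and, since $p=-b/2$, makes the explicit $b$ cancel in every Wright parameter: the numerator entries collapse to $(-\rho-\gamma+\alpha+\alpha',2),\,(-\rho+\alpha+\beta'-\gamma,2),\,(-\rho-\beta,2)$ and the denominator entries to $(-\rho,2),\,(-\rho-\gamma+\alpha+\alpha'+\beta',2),\,(-\rho+\alpha-\beta,2),\,(1/2,1)$, while the power of $x$ becomes $\rho-\alpha-\alpha'+\gamma$. The hyperbolic-cosine identity is then obtained verbatim by taking $c\mapsto-c^2$, which only replaces $-c^2/(4x^2)$ by $+c^2/(4x^2)$ through the second relation in (\ref{eqn:cos-0}).

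I expect the one genuinely delicate step---as opposed to routine substitution---to be checking that the single linear shift $\sigma=\rho+1-b/2$ does three jobs at once consistently: it must normalize the $t$-power, annihilate $b$ in all six ${}_3\psi_4$ parameters, and carry Theorem \ref{thm-2}'s admissibility range $\RM(\sigma-p)<1+\min\{\RM(-\beta),\RM(\alpha+\alpha'-\gamma),\RM(\alpha+\beta'-\gamma)\}$ into the displayed condition $\RM(\rho)<\min\{\RM(-\beta),\RM(\alpha+\alpha'-\gamma),\RM(\alpha+\beta'-\gamma)\}$, since here $\sigma-p=\rho+1$. Keeping this bookkeeping aligned---together with the fact that the termwise interchange of the series (\ref{eqn:cos-0}) with $I_{0,-}$ is already licensed by Theorem \ref{thm-2} under exactly this range---is the crux; the remaining analytic content is inherited wholesale from Theorem \ref{thm-2}.
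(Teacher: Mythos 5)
Your proposal is correct and is precisely the argument the paper intends: the corollary is stated without proof, but it is the same specialization used for the preceding cosine corollary (set $p=-b/2$ so that $\kappa=1/2$, replace $c$ by $\pm c^2$, use the reduction of $\mathcal{W}_{-b/2,b,\pm c^2}$ to $\cos$ and $\cosh$, and shift the exponent — here $\sigma=\rho+1-b/2$ — so that $b$ cancels everywhere and the hypothesis of Theorem \ref{thm-2} becomes the displayed condition on $\RM(\rho)$), applied to Theorem \ref{thm-2} instead of Theorem \ref{thm-1}. Your bookkeeping in fact exposes misprints in the printed statement: the operator should be $I_{0,-}$ rather than $I_{0,+}$, the first identity should carry $t^{\rho}$ (not $t^{\rho-1}$) to be consistent with the factor $x^{\rho-\alpha-\alpha'+\gamma}$, and the first Wright parameter should read $(-\rho-\gamma+\alpha+\alpha',2)$, exactly as your computation produces.
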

The next statements shows that the image formulas for cosine and hyperbolic cosine under Saigo-Maeda fractional integral operators can also be represent in terms of the generalized hypergeometric series. This result follows
from Theorem $\ref{thm-3}$ and Theorem $\ref{thm-4}$ with taking $p=-{b}/{2}$ and replacing $c$ by $c^2$ or $-c^2$ respectively.

\begin{corollary}
Let $\alpha,\alpha',\beta,\beta',\gamma,\rho, c \in \mathbb{C}$. Suppose that
$\RM{(\gamma)}>0$ and  \[\RM{(\rho)}>\max\{0, \RM{(\alpha-\alpha'-\beta-\gamma)}, \RM{(\alpha'-\beta')}\}.\]
Then there hold formula:
\begin{align*}
&\left(I_{0,+}^{\alpha,\alpha^{'},\beta,\beta',\gamma}\; t^{\rho-1}\cos(ct)\right )(x)\\
&=\frac{\Gamma(\rho)\Gamma(\rho+\gamma-\alpha-\alpha{'}-\beta)\Gamma(\rho+\beta{'}-\alpha{'})}{\Gamma(\rho+\beta{'})
\Gamma(\rho+\gamma-\alpha-\alpha{'})\Gamma(\rho+\gamma-\alpha{'}-\beta)}x^{\rho-\alpha-\alpha{'}+\gamma-1}\\
&\hspace{.4in}\times{}_6F_7 \left[\begin{array}{lll}
\frac{\rho}{2},\frac{\rho+1}{2},\frac{\rho+\gamma-\alpha-\alpha{'}-\beta}{2},\frac{\rho+\gamma-\alpha-\alpha{'}+\beta+1}{2},
\frac{\rho+\beta{'}-\alpha{'}}{2},\frac{\rho+\beta{'}-\alpha{'}+1}{2};\\
\frac{1}{2},\frac{\rho+\beta{'}}{2},\frac{\rho+\beta{'}+1}{2},\frac{\rho+\gamma-\alpha-\alpha{'}}{2},
\frac{\rho+\gamma-\alpha-\alpha{'}+1}{2},\frac{\rho+\gamma-\alpha{'}-\beta}{2},\frac{\rho+\gamma-\alpha{'}-\beta+1}{2}
\end{array}\bigg|-\frac{c^2x^2}{4}\right] \end{align*}
and
\begin{align*}
&\left(I_{0,+}^{\alpha,\alpha^{'},\beta,\beta',\gamma}\; t^{\rho-1}\cosh(ct)\right )(x)\\
&=\frac{\Gamma(\rho)\Gamma(\rho+\gamma-\alpha-\alpha{'}-\beta)\Gamma(\rho+\beta{'}-\alpha{'})}{\Gamma(\rho+\beta{'})
\Gamma(\rho+\gamma-\alpha-\alpha{'})\Gamma(\rho+\gamma-\alpha{'}-\beta)}x^{\rho-\alpha-\alpha{'}+\gamma-1}\\
&\hspace{.5in}\times{}_6F_7 \left[\begin{array}{lll}
\frac{\rho}{2},\frac{\rho+1}{2},\frac{\rho+\gamma-\alpha-\alpha{'}-\beta}{2},\frac{\rho+\gamma-\alpha-\alpha{'}+\beta+1}{2},
\frac{\rho+\beta{'}-\alpha{'}}{2},\frac{\rho+\beta{'}-\alpha{'}+1}{2};\\
\frac{1}{2},\frac{\rho+\beta{'}}{2},\frac{\rho+\beta{'}+1}{2},\frac{\rho+\gamma-\alpha-\alpha{'}}{2},
\frac{\rho+\gamma-\alpha-\alpha{'}+1}{2},\frac{\rho+\gamma-\alpha{'}-\beta}{2},\frac{\rho+\gamma-\alpha{'}-\beta+1}{2}
\end{array}\bigg|\frac{c^2x^2}{4}\right].\end{align*}
\end{corollary}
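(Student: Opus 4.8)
The plan is to derive both identities as specializations of Theorem \ref{thm-3}, in exactly the manner by which the earlier corollaries were obtained from Theorems \ref{thm-1} and \ref{thm-2}. The mechanism is the reduction $(\ref{eqn:cos-0})$: the choice $p=-b/2$ turns the generalized Bessel function into a trigonometric function, since $\mathcal{W}_{-b/2,b,c^2}(t)=(2/t)^{b/2}\cos(ct)/\sqrt{\pi}$ and $\mathcal{W}_{-b/2,b,-c^2}(t)=(2/t)^{b/2}\cosh(ct)/\sqrt{\pi}$. For the cosine identity I would set $p=-b/2$ in Theorem \ref{thm-3} and replace the third index $c$ by $c^2$, so that the Bessel function in the image formula becomes precisely $\mathcal{W}_{-b/2,b,c^2}$ and the argument $-cx^2/4$ of the ${}_6F_7$ becomes $-c^2x^2/4$, matching the right-hand side.

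The second step is to track the effect of $p=-b/2$ on the constants and parameters. The index collapses to $\kappa=p+(b+1)/2=1/2$, so $\Gamma(\kappa)=\Gamma(1/2)=\sqrt{\pi}$ enters the gamma prefactor of Theorem \ref{thm-3} and $\kappa=1/2$ becomes the first lower parameter of the ${}_6F_7$. Solving $(\ref{eqn:cos-0})$ for the cosine gives $t^{\rho-1}\cos(ct)=\sqrt{\pi}\,2^{-b/2}\,t^{(\rho+b/2)-1}\mathcal{W}_{-b/2,b,c^2}(t)$; applying $I_{0,+}^{\alpha,\alpha',\beta,\beta',\gamma}$ and invoking Theorem \ref{thm-3} with $\rho$ replaced by $\rho+b/2$ makes every occurrence of the combination $\rho+p$ collapse to $\rho$. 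The power of $x$ then reduces to $x^{\rho-\alpha-\alpha'+\gamma-1}$, and all fourteen numerator and denominator parameters of the ${}_6F_7$ turn into the stated half-integer-shifted expressions in $\rho$ alone.

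For the hyperbolic cosine I would repeat the argument with $c$ replaced by $-c^2$ and the second identity in $(\ref{eqn:cos-0})$; this leaves the gamma prefactor and every hypergeometric parameter untouched and only changes the sign of the argument, sending $-c^2x^2/4$ to $c^2x^2/4$, which is precisely the second displayed formula.

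The main obstacle is purely one of bookkeeping of the multiplicative constants: one must verify that the $\sqrt{\pi}$ coming from $\Gamma(1/2)$, the $\sqrt{\pi}$ appearing in $(\ref{eqn:cos-0})$, and the powers $2^{\pm b/2}$ all cancel so that no stray constant survives in front of the gamma ratio, and that the combined substitution $p=-b/2$, $\rho\mapsto\rho+b/2$ removes the $b$-dependence uniformly from every gamma argument and every ${}_6F_7$ parameter. Once this uniform collapse $\rho+p\mapsto\rho$ is checked, the two identities follow immediately.
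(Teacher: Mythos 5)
Your proposal is correct and is exactly the route the paper takes: the paper simply asserts that this corollary follows from Theorem \ref{thm-3} by setting $p=-b/2$ and replacing $c$ by $c^2$ or $-c^2$, which is precisely your specialization, and your additional shift $\rho\mapsto\rho+b/2$ to collapse $\rho+p$ to $\rho$ mirrors the step the paper carries out explicitly in its proof of the analogous Wright-function corollary. Your constant bookkeeping is also right --- the $\sqrt{\pi}=\Gamma(1/2)=\Gamma(\kappa)$ in the denominator of the gamma prefactor cancels against the $\sqrt{\pi}$ from $(\ref{eqn:cos-0})$, and $2^{-b/2}$ cancels against $2^{-p}=2^{b/2}$, leaving no stray constant.
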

\begin{corollary}
Let $\alpha,\alpha',\beta,\beta',\gamma,\rho, c \in \mathbb{C}$.
Suppose that
$\RM{(\gamma)}>0$ and
\[\RM{(\rho)}< 1+ \min\{\RM{(-\beta)}, \RM{(\alpha+\alpha'-\gamma)}, \RM{(\alpha+\beta'-\gamma)}\}.\]
Then
\begin{align*}
&\left(I_{0,-}^{\alpha,\alpha{'},\beta,\beta',\gamma}\; t^{\rho}\cos\left(\frac{c}{t}\right)\right )(x)\\
&=\frac{\Gamma(\alpha+\alpha{'}-\gamma-\rho)\Gamma(\alpha+\beta{'}-\gamma-\rho) \Gamma(-\beta-\rho)}
{\Gamma(-\rho)\Gamma(\alpha+\alpha{'}+\beta{'}-\gamma-\rho)\Gamma(\alpha-\beta-\rho)}
x^{\rho-\alpha-\alpha{'}+\gamma}\\
&\hspace{.5in}\times{}_6F_7\left[\begin{array}{lll}
\frac{\alpha+\alpha{'}-\rho}{2},\frac{\alpha+\alpha{'}-\gamma-\rho+1}{2},\frac{\alpha+\beta{'}-\gamma-\rho}{2},\frac{\alpha+\beta{'}-\gamma-\rho+1}{2},\frac{-\beta-\rho}{2},\frac{-\beta-\rho+1}{2};\\
\frac{1}{2},-\frac{\rho}{2},-\frac{\rho+1}{2},\frac{\alpha+\alpha{'}+\beta{'}-\gamma-\rho}{2},\frac{\alpha+\alpha{'}+\beta{'}-\gamma-\rho+1}{2},\frac{\alpha-\beta-\rho}{2},\frac{\alpha-\beta-\rho+1}{2},
\end{array}\bigg|-\frac{c^2}{4x^2}\right]
\end{align*}
and
\begin{align*}
&\left(I_{0,-}^{\alpha,\alpha^{'},\beta,\beta',\gamma}\; t^{\rho}\cosh\left(\frac{c}{t}\right)\right )(x)\\
&=\frac{\Gamma(\alpha+\alpha{'}-\gamma-\rho)\Gamma(\alpha+\beta{'}-\gamma-\rho)}
{\Gamma(-\rho)\Gamma(\alpha+\alpha{'}+\beta{'}-\gamma-\rho)}\frac{\Gamma(-\beta-\rho)}{\Gamma(\alpha-\beta-\rho)}
x^{\rho-\alpha-\alpha{'}+\gamma}\\
&\hspace{.5in}\times{}_6F_7\left[\begin{array}{lll}
\frac{\alpha+\alpha{'}-\rho}{2},\frac{\alpha+\alpha{'}-\gamma-\rho+1}{2},\frac{\alpha+\beta{'}
-\gamma-\rho}{2},\frac{\alpha+\beta{'}-\gamma-\rho+1}{2},\frac{-\beta-\rho}{2},\frac{-\beta-\rho+1}{2};\\
\frac{1}{2},-\frac{\rho}{2},-\frac{\rho+1}{2},\frac{\alpha+\alpha{'}+\beta{'}-\gamma-\rho}{2},
\frac{\alpha+\alpha{'}+\beta{'}-\gamma-\rho+1}{2},\frac{\alpha-\beta-\rho}{2},\frac{\alpha-\beta-\rho+1}{2}
\end{array}\bigg|\frac{c^2}{4x^2}\right].
\end{align*}
\end{corollary}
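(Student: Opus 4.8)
The plan is to derive this corollary directly from Theorem~\ref{thm-4} by specializing the generalized Bessel function to a cosine (respectively hyperbolic cosine) through the reductions recorded in~(\ref{eqn:cos-0}). First I would substitute $z = 1/t$ in~(\ref{eqn:cos-0}) to obtain $\mathcal{W}_{-b/2,\,b,\,c^{2}}(1/t) = (2t)^{b/2}\cos(c/t)/\sqrt{\pi}$ and, with $c$ replaced by $-c^{2}$, the analogous identity with $\cosh(c/t)$ on the right. Setting $p = -b/2$ then forces $\kappa = p + (b+1)/2 = 1/2$, so that $\Gamma(\kappa) = \Gamma(1/2) = \sqrt{\pi}$; this single observation is what makes the extraneous constants collapse.

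Next I would apply the fractional integral $I_{0,-}^{\alpha,\alpha',\beta,\beta',\gamma}$ to $t^{\sigma-1}\mathcal{W}_{-b/2,\,b,\,c^{2}}(1/t)$ and invoke Theorem~\ref{thm-4} with $p = -b/2$ and $c$ replaced by $c^{2}$. By the reduction above the integrand equals $\frac{2^{b/2}}{\sqrt{\pi}}\,t^{\sigma-1+b/2}\cos(c/t)$, so the choice $\sigma = \rho + 1 - b/2$ turns the left-hand side into $\frac{2^{b/2}}{\sqrt{\pi}}\bigl(I_{0,-}^{\alpha,\alpha',\beta,\beta',\gamma}\,t^{\rho}\cos(c/t)\bigr)(x)$. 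Solving for the cosine transform introduces a factor $\sqrt{\pi}\,2^{-b/2}$, which cancels against the $2^{b/2} = 2^{-p}$ already present in the prefactor of Theorem~\ref{thm-4} and against the $1/\Gamma(\kappa) = 1/\sqrt{\pi}$ sitting in its gamma quotient. Thus every explicit constant except the six surviving gamma functions disappears.

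It then remains to substitute $\sigma = \rho + 1 - b/2$ and $p = -b/2$ into each argument of Theorem~\ref{thm-4}. A direct check shows that the combination $\sigma - p = \rho + 1$ eliminates $b$ throughout: for instance $\Gamma(\alpha+\alpha'+p-\gamma-\sigma+1)$ becomes $\Gamma(\alpha+\alpha'-\gamma-\rho)$, the $x$-exponent $\sigma - p - \alpha - \alpha' + \gamma - 1$ collapses to $\rho - \alpha - \alpha' + \gamma$, and each of the twelve half-integer-shifted ${}_6F_7$ parameters loses its $p$ and $b/2$ contributions, reproducing exactly the parameter list in the statement. The same computation with $c \mapsto -c^{2}$ yields the hyperbolic-cosine formula, the only change being the argument $c^{2}/(4x^{2})$ in place of $-c^{2}/(4x^{2})$. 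Finally, the hypothesis $\RM(\sigma - p) < 1 + \min\{\RM(-\beta), \RM(\alpha+\alpha'-\gamma), \RM(\alpha+\beta'-\gamma)\}$ of Theorem~\ref{thm-4} reads $\RM(\rho+1) < 1 + \min\{\cdots\}$, that is, precisely the condition $\RM(\rho) < \min\{\cdots\}$ imposed here.

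The step I expect to demand the most care is the parameter bookkeeping in the previous paragraph: one must track the shift $\sigma = \rho + 1 - b/2$ simultaneously through the six gamma factors and the twelve half-integer ${}_6F_7$ parameters and confirm that $b$ cancels in every slot. There is no analytic obstacle—the interchange of summation and integration and the termwise validity are already underwritten by Theorem~\ref{thm-4}—so the difficulty is purely one of organized verification rather than a new idea.
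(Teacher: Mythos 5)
Your proposal is correct and matches the paper's (essentially unstated) proof: the paper merely asserts that this corollary follows from Theorem~\ref{thm-4} with $p=-b/2$ and $c\mapsto c^2$ or $-c^2$, and your substitution $\sigma=\rho+1-b/2$ together with $\Gamma(\kappa)=\Gamma(1/2)=\sqrt{\pi}$ is exactly the bookkeeping needed to verify that claim. Note that your derivation in fact yields the slightly sharper hypothesis $\RM(\rho)<\min\{\cdots\}$ (consistent with the paper's other $I_{0,-}$ corollaries) and the parameters $\tfrac{\alpha+\alpha'-\gamma-\rho}{2}$ and $\tfrac{-\rho+1}{2}$ rather than $\tfrac{\alpha+\alpha'-\rho}{2}$ and $-\tfrac{\rho+1}{2}$, so the small discrepancies with the printed statement are typographical slips in the paper, not gaps in your argument.
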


\subsection{Sine and hyperbolic sine functions}
For all $b\in\mathbb{C}$, if $p=1-b/2$, then the generalized Bessel function $\mathcal{W}_{p, b, c}(z)$ have the form
\begin{align}\label{eqn:cos-0}
\mathcal{W}_{1-\frac{b}{2}, b, c^{2}}(z)=\left(\frac{2}{z}\right)^{\frac{b}{2}}\frac{\sin( c z)}{\sqrt{\pi}} \quad
\text{and} \quad
\mathcal{W}_{1-\frac{b}{2}, b, - c^{2}}(z)=\left(\frac{2}{z}\right)^{\frac{b}{2}}\frac{\sinh( c z)}{\sqrt{\pi}}.
\end{align}
Thus, the composition of Saigo-Maeda fractional integral operators with sine and hyperbolic sine functions
can be obtained from Theorem $\ref{thm-1}$ and Theorem $\ref{thm-2}$ respectively.
\begin{corollary}
Let $\alpha,\alpha',\beta,\beta',\gamma,\rho,  c \in \mathbb{C}$
such that
$\RM{(\gamma)}>0$ and  \[\RM{(\rho)}>\max\{0, \RM{(\alpha-\alpha'-\beta-\gamma)}, \RM{(\alpha'-\beta')}\}.\]
Then
\begin{align}\label{eqn:cos-1}\nonumber
&\left(I_{0,+}^{\alpha,\alpha{'},\beta,\beta',\gamma}\; t^{\rho-1} \sin(ct)\right )(x)=\pi^{\frac{1}{2}}x^{\rho-\alpha-\alpha{'}+\gamma-1}\\
&\quad\quad\times{}_3\psi_4\small{\left[\begin{array}{lll}
(\rho,2),(\rho+\gamma-\alpha-\alpha{'}-\beta,2),(\rho+\beta{'}-\alpha{'},2);\\
(\rho+\beta{'},2),(\rho+\gamma-\alpha-\alpha{'},2),(\rho+\gamma-\alpha{'}-\beta,2),(\frac{3}{2},1)
\end{array}\bigg|-\frac{c^2x^2}{4}\right]}
\end{align}
and
\begin{align}\label{eqn:cos-2}\nonumber
&\left(I_{0,+}^{\alpha,\alpha{'},\beta,\beta',\gamma}\; t^{\rho-1}\sinh(ct)\right )(x)=\pi^{\frac{1}{2}}x^{\rho-\alpha-\alpha{'}+\gamma-1}\\
&\quad \quad\times{}_3\psi_4\small{\left[\begin{array}{lll}
(\rho,2),(\rho+\gamma-\alpha-\alpha{'}-\beta,2),(\rho+\beta{'}-\alpha{'},2);\\
(\rho+\beta{'},2),(\rho+\gamma-\alpha-\alpha{'},2),(\rho+\gamma-\alpha{'}-\beta,2),(\frac{3}{2},1)
\end{array}\bigg|\frac{c^2x^2}{4}\right]}.
\end{align}
\end{corollary}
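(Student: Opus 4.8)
The plan is to follow the derivation of the cosine/hyperbolic-cosine corollary verbatim, changing only the parameter choice from $p=-b/2$ to $p=1-b/2$. The engine is the pair of reduction formulas recorded immediately above the statement,
\[
\mathcal{W}_{1-\frac{b}{2},b,c^{2}}(z)=\left(\frac{2}{z}\right)^{\frac{b}{2}}\frac{\sin(cz)}{\sqrt{\pi}},
\qquad
\mathcal{W}_{1-\frac{b}{2},b,-c^{2}}(z)=\left(\frac{2}{z}\right)^{\frac{b}{2}}\frac{\sinh(cz)}{\sqrt{\pi}},
\]
which express the elementary sine (resp. hyperbolic sine) as a single specialization of the generalized Bessel function whose image is already computed.

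First I would specialize the master identity \eqref{eqn-1:thm-1} of Theorem \ref{thm-1} to $p=1-b/2$, taking the third Bessel parameter to be $c^{2}$ in the trigonometric case and $-c^{2}$ in the hyperbolic case. The decisive observation is that this choice forces $\kappa=p+(b+1)/2=3/2$, so the last lower entry of the ${}_3\psi_4$ is $(\tfrac32,1)$ rather than the $(\tfrac12,1)$ of the cosine corollary, while the six remaining parameters retain their shape and now carry the shift $\rho+p=\rho+1-b/2$. Passing from $c^{2}$ to $-c^{2}$ merely reverses the sign of the Wright argument, which accounts for the only difference between the two claimed formulas.

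Next I would insert the reduction formula into the left-hand side: since
\[
t^{\rho-1}\mathcal{W}_{1-\frac{b}{2},b,c^{2}}(t)=2^{b/2}\pi^{-1/2}\,t^{\rho-b/2-1}\sin(ct),
\]
the image of $t^{\rho-1}\mathcal{W}_{1-b/2,b,c^{2}}(t)$ equals $2^{b/2}\pi^{-1/2}$ times the image of $t^{\rho-b/2-1}\sin(ct)$. Dividing through, collecting this scalar against the $2^{-p}=2^{b/2-1}$ prefactor supplied by Theorem \ref{thm-1}, and then relabelling $\rho\mapsto\rho+b/2$ — exactly the device used to pass from the intermediate cosine identity to its final form — removes the $-b/2$ from the exponent of $x$ and from every Wright parameter and should deliver the stated representation. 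The hyperbolic-sine formula is obtained in the same way from the $-c^{2}$ specialization, the only change being that the argument of the ${}_3\psi_4$ becomes $+c^{2}x^{2}/4$.

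The step I expect to be the main obstacle is precisely this constant-and-index reconciliation. Because $p=1-b/2$ carries an extra $+1$ relative to the cosine case $p=-b/2$, one must track simultaneously the accumulated powers of $2$, the $\pi^{1/2}$ normalization, and the way the shift $\rho+p$ threads through all six $\Gamma$-quotient parameters, so that after the substitution $\rho\mapsto\rho+b/2$ they coincide with the displayed upper parameters $(\rho,2),(\rho+\gamma-\alpha-\alpha'-\beta,2),(\rho+\beta'-\alpha',2)$ and lower parameters $(\rho+\beta',2),(\rho+\gamma-\alpha-\alpha',2),(\rho+\gamma-\alpha'-\beta,2),(\tfrac32,1)$. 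The analytic content — interchanging summation and integration and the convergence on which it rests — is inherited unchanged from Theorem \ref{thm-1} under the hypotheses $\RM(\gamma)>0$ and $\RM(\rho)>\max\{0,\RM(\alpha-\alpha'-\beta-\gamma),\RM(\alpha'-\beta')\}$.
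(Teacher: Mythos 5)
Your strategy --- specialize Theorem \ref{thm-1} to $p=1-b/2$ with $c$ replaced by $c^{2}$ (resp.\ $-c^{2}$), invoke the reduction formula for $\mathcal{W}_{1-b/2,\,b,\,\pm c^{2}}$, and then relabel $\rho$ --- is exactly the route the paper intends (it gives no proof here, only the remark that the result ``can be obtained from Theorem \ref{thm-1}'', in parallel with the proved cosine corollary). However, the step you explicitly defer, the ``constant-and-index reconciliation'', is not routine bookkeeping: it is where the derivation fails to produce the displayed formula. Carrying it out, Theorem \ref{thm-1} with $p=1-b/2$ supplies the prefactor $2^{-p}=2^{b/2-1}$, the power $x^{\rho+1-b/2-\alpha-\alpha'+\gamma-1}$, and Wright parameters built on $\rho+p=\rho+1-b/2$; combining this with $t^{\rho-1}\mathcal{W}_{1-b/2,b,c^2}(t)=2^{b/2}\pi^{-1/2}\,t^{\rho-b/2-1}\sin(ct)$ and then substituting $\rho\mapsto\rho+b/2$ yields
\begin{align*}
\left(I_{0,+}^{\alpha,\alpha',\beta,\beta',\gamma}\; t^{\rho-1}\sin(ct)\right)(x)
&=\frac{\sqrt{\pi}}{2}\,x^{\rho-\alpha-\alpha'+\gamma}\\
&\quad\times{}_3\psi_4\left[\begin{array}{l}
(\rho+1,2),(\rho+1+\gamma-\alpha-\alpha'-\beta,2),(\rho+1+\beta'-\alpha',2);\\
(\rho+1+\beta',2),(\rho+1+\gamma-\alpha-\alpha',2),(\rho+1+\gamma-\alpha'-\beta,2),(\tfrac{3}{2},1)
\end{array}\bigg|-\frac{c^2x^2}{4}\right].
\end{align*}
The extra $+1$ in $p$ survives the relabelling: every parameter carries $\rho+1$ in place of $\rho$, the exponent of $x$ is $\rho-\alpha-\alpha'+\gamma$ rather than $\rho-\alpha-\alpha'+\gamma-1$, and the constant is $\sqrt{\pi}/2$ rather than $\sqrt{\pi}$; moreover the reduction formula itself should read $\mathcal{W}_{1-b/2,b,c^{2}}(z)=(2/z)^{b/2}\sin(cz)/(c\sqrt{\pi})$, which contributes a further factor $c$. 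A direct verification --- expand $\sin(ct)=\sum_{k\ge0}(-1)^kc^{2k+1}t^{2k+1}/(2k+1)!$, apply Lemma \ref{lem-1} termwise, and use the duplication formula in the form $(2k+1)!=2^{2k+1}\pi^{-1/2}\,k!\,\Gamma(k+\tfrac{3}{2})$ --- confirms the corrected right-hand side $\tfrac{c\sqrt{\pi}}{2}\,x^{\rho-\alpha-\alpha'+\gamma}\,{}_3\psi_4[(\rho+1,2),\ldots]$. So your approach is the correct (and the intended) one, but the reconciliation you postpone with ``should deliver the stated representation'' is precisely the gap: the specialization does not reduce to the identity as printed, and no choice of the relabelling $\rho\mapsto\rho+\text{const}$ simultaneously fixes the left-hand exponent $t^{\rho-1}$ and the displayed parameters. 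The same discrepancy affects the hyperbolic-sine formula.
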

%

The next result follows from Theorem $\ref{thm-2}$ by setting $p = - b/2$
and replacing $c$ by $c^2$ or $- c^2$ respectively.

\begin{corollary}
Let $\alpha,\alpha{'},\beta,\beta{'},\gamma,\rho\in\mathbb{C}$ be such that $\RM{(\gamma)}>0$, and
\[\RM{(\rho)}<\min\left\{\RM(-\beta), \RM(\alpha+\alpha{'}-\gamma), \RM(\alpha+\beta{'}-\gamma)\right\}.\]
Then
\begin{align*}
&\left(I_{0,+}^{\alpha,\alpha^{'},\beta,\beta',\gamma}\; t^{\rho-1}\sin\left(\dfrac{c}{t}\right)\right )(x)=\pi^{\frac{1}{2}}x^{\rho-\alpha-\alpha{'}+\gamma}\\&\quad \quad
\times{}_3\psi_4 \left[\begin{array}{lll}
(\rho-\gamma+\alpha+\alpha{'},2),(-\rho+\alpha+\beta{'}-\gamma,2),(-\rho-\beta,2);\\
(-\rho,2),(-\rho-\gamma+\alpha+\alpha{'}+\beta{'},2),(-\rho+\alpha-\beta,2),(\frac{3}{2},1)
\end{array}\bigg|-\frac{c^2}{4x^2}\right]\end{align*}
and
\begin{align*}
&\left(I_{0,+}^{\alpha,\alpha^{'},\beta,\beta',\gamma}\; t^{\rho}\sinh\left(\dfrac{c}{t}\right)\right )(x)=\pi^{\frac{1}{2}}x^{\rho-\alpha-\alpha{'}+\gamma}
\\&\quad\quad\times{}_3\psi_4 \left[\begin{array}{lll}
(\rho-\gamma+\alpha+\alpha{'},2),(-\rho+\alpha+\beta{'}-\gamma,2),(-\rho-\beta,2);\\
(-\rho,2),(-\rho-\gamma+\alpha+\alpha{'}+\beta{'},2),(-\rho+\alpha-\beta,2),(\frac{3}{2},1)
\end{array}\bigg|\frac{c^2}{4x^2}\right].\end{align*}
\end{corollary}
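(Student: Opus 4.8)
The plan is to derive both identities as specializations of Theorem \ref{thm-2}, in exact parallel with the cosine and hyperbolic-cosine corollaries. The key observation is that the choice $p=1-b/2$ collapses the generalized Bessel function to the sine (resp. hyperbolic sine) kernel: the reduction formulas for $\mathcal{W}_{1-b/2,\,b,\,\pm c^{2}}$ give, after setting $z=1/t$, an expression of the form $\mathcal{W}_{1-b/2,\,b,\,c^{2}}(1/t)=(2t)^{b/2}\,\sin(c/t)/\sqrt{\pi}$, and the analogous one with $\sinh$ for the argument $-c^{2}$. With this choice $\kappa=p+(b+1)/2=3/2$, which is exactly the lower Wright parameter $(\tfrac32,1)$ appearing in the asserted ${}_3\psi_4$; this is the only difference from the cosine case, where $\kappa=1/2$.

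First I would feed $t^{\rho-1}\mathcal{W}_{1-b/2,\,b,\,c^{2}}(1/t)$ into the operator $I_{0,-}^{\alpha,\alpha',\beta,\beta',\gamma}$ of Theorem \ref{thm-2} and use the reduction formula to rewrite the integrand as a constant multiple of $t^{\rho+b/2-1}\sin(c/t)$. By linearity the factor $(2t)^{b/2}/\sqrt{\pi}$ is pulled outside, so the image of $t^{\rho+b/2-1}\sin(c/t)$ is expressed through the image of $t^{\rho-1}\mathcal{W}_{1-b/2,\,b,\,c^{2}}(1/t)$, which Theorem \ref{thm-2} evaluates with $p=1-b/2$ and $c$ replaced by $c^{2}$. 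Substituting $p=1-b/2$ into the ${}_3\psi_4$ and into the power of $x$ of Theorem \ref{thm-2}, and finally relabelling the exponent (replacing the old $\rho$ by $\rho-b/2$, so that the operand reads $t^{\rho-1}\sin(c/t)$), all occurrences of $b$ should cancel out of the Wright parameters, leaving precisely the three numerator and four denominator forms displayed in the statement, with the argument of the Wright function equal to $-c^{2}/(4x^{2})$.

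The hyperbolic-sine identity then follows verbatim on replacing $c$ by $-c^{2}$ throughout, which merely flips the sign of the series argument to $+c^{2}/(4x^{2})$ while leaving every parameter unchanged. For the hypotheses I would check that the bound $\RM(\rho)<\min\{\RM(-\beta),\RM(\alpha+\alpha'-\gamma),\RM(\alpha+\beta'-\gamma)\}$ of the corollary implies, for every term $\rho-p-2k$ ($k\ge 0$) of the underlying series and after the shift, the condition $\RM(\rho-p)<1+\min\{\cdots\}$ required by Theorem \ref{thm-2}; this is immediate since subtracting $2k\ge 0$ only lowers the real part, so term-by-term application is legitimate and the interchange of summation and integration is justified. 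The step I expect to be most delicate is the bookkeeping of the normalizing constant together with the $\rho$-shift: one must verify that the powers of $2$ arising from $2^{-p}$ and from $(2t)^{b/2}$ combine with $1/\sqrt{\pi}$ to give exactly the stated prefactor $\pi^{1/2}$ and the exponent $\rho-\alpha-\alpha'+\gamma$, and in particular that no residual $b$- or $c$-dependence survives in the constant.
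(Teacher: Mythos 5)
Your proposal follows exactly the paper's route: the authors obtain this corollary by specializing Theorem \ref{thm-2} (reduce $\mathcal{W}_{p,b,c}(1/t)$ to the sine or hyperbolic-sine kernel, substitute $c\mapsto \pm c^{2}$, pull out the power of $t$ and shift $\rho$), and they offer no more detail than that one-line justification. You in fact correct their misprint by taking $p=1-b/2$ (so that $\kappa=3/2$, matching the parameter $(\tfrac32,1)$) where the text erroneously says $p=-b/2$; the constant and exponent bookkeeping you flag as the delicate step is precisely where the printed statement is least reliable.
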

Following result can be obtained
from Theorem $\ref{thm-3}$ and Theorem $\ref{thm-4}$ with taking $p=-{b}/{2}$ and replacing $c$ by $c^2$ or $-c^2$ respectively.

\begin{corollary}
Let $\alpha,\alpha',\beta,\beta',\gamma,\rho, c \in \mathbb{C}$. Suppose that
$\RM{(\gamma)}>0$ and  \[\RM{(\rho)}>\max\{0, \RM{(\alpha-\alpha'-\beta-\gamma)}, \RM{(\alpha'-\beta')}\}.\]
Then there hold formula:
\begin{align*}
&\left(I_{0,+}^{\alpha,\alpha^{'},\beta,\beta',\gamma}\; t^{\rho-1}\sin(ct)\right )(x)
\\&=\frac{\Gamma(\rho)\Gamma(\rho+\gamma-\alpha-\alpha{'}-\beta)\Gamma(\rho+\beta{'}-\alpha{'})}{\Gamma(\rho+\beta{'})
\Gamma(\rho+\gamma-\alpha-\alpha{'})\Gamma(\rho+\gamma-\alpha{'}-\beta)}x^{\rho-\alpha-\alpha{'}+\gamma-1}\\
&\hspace{.5in}\times{}_6F_7 \left[\begin{array}{lll}
\frac{\rho}{2},\frac{\rho+1}{2},\frac{\rho+\gamma-\alpha-\alpha{'}-\beta}{2},\frac{\rho+\gamma-\alpha-\alpha{'}+\beta+1}{2},
\frac{\rho+\beta{'}-\alpha{'}}{2},\frac{\rho+\beta{'}-\alpha{'}+1}{2};\\
\frac{3}{2},\frac{\rho+\beta{'}}{2},\frac{\rho+\beta{'}+1}{2},\frac{\rho+\gamma-\alpha-\alpha{'}}{2},
\frac{\rho+\gamma-\alpha-\alpha{'}+1}{2},\frac{\rho+\gamma-\alpha{'}-\beta}{2},\frac{\rho+\gamma-\alpha{'}-\beta+1}{2}
\end{array}\bigg|-\frac{c^2x^2}{4}\right], \end{align*}
and
\begin{align*}
&\left(I_{0,+}^{\alpha,\alpha^{'},\beta,\beta',\gamma}\; t^{\rho-1}\sinh(ct)\right )(x)\\
&=\frac{\Gamma(\rho)\Gamma(\rho+\gamma-\alpha-\alpha{'}-\beta)\Gamma(\rho+\beta{'}-\alpha{'})}{\Gamma(\rho+\beta{'})
\Gamma(\rho+\gamma-\alpha-\alpha{'})\Gamma(\rho+\gamma-\alpha{'}-\beta)}x^{\rho-\alpha-\alpha{'}+\gamma-1}\\
&\hspace{.5in}\times{}_6F_7 \left[\begin{array}{lll}
\frac{\rho}{2},\frac{\rho+1}{2},\frac{\rho+\gamma-\alpha-\alpha{'}-\beta}{2},\frac{\rho+\gamma-\alpha-\alpha{'}+\beta+1}{2},
\frac{\rho+\beta{'}-\alpha{'}}{2},\frac{\rho+\beta{'}-\alpha{'}+1}{2};\\
\frac{3}{2},\frac{\rho+\beta{'}}{2},\frac{\rho+\beta{'}+1}{2},\frac{\rho+\gamma-\alpha-\alpha{'}}{2},
\frac{\rho+\gamma-\alpha-\alpha{'}+1}{2},\frac{\rho+\gamma-\alpha{'}-\beta}{2},\frac{\rho+\gamma-\alpha{'}-\beta+1}{2}
\end{array}\bigg|\frac{c^2x^2}{4}\right] .\end{align*}
\end{corollary}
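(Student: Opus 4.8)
The plan is to obtain this corollary as a direct specialization of Theorem \ref{thm-3}, exactly mirroring the way the cosine and hyperbolic-cosine ${}_6F_7$ corollaries were deduced, the only change being the value of the auxiliary parameter $\kappa$. The key observation is that the choice $p=1-b/2$ forces $\kappa=p+(b+1)/2=3/2$, and for this choice the generalized Bessel function degenerates to a sine or hyperbolic sine: one has $\mathcal{W}_{1-b/2,\,b,\,c^2}(t)=(2/t)^{b/2}\sin(ct)/\sqrt{\pi}$ and $\mathcal{W}_{1-b/2,\,b,\,-c^2}(t)=(2/t)^{b/2}\sinh(ct)/\sqrt{\pi}$. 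Consequently $t^{\rho-1}\mathcal{W}_{1-b/2,b,\pm c^2}(t)$ equals, up to the constant $2^{b/2}/\sqrt{\pi}$, the product $t^{\rho-b/2-1}\sin(ct)$ (respectively $\sinh(ct)$), so that Theorem \ref{thm-3} already encodes the desired image once its parameters are fixed. Since the two target formulas both carry the argument $\mp c^2x^2/4$ (with $x^2$, not $1/x^2$), both descend from Theorem \ref{thm-3}; the value $c\mapsto c^2$ produces the sine case and $c\mapsto -c^2$ the hyperbolic-sine case.

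First I would substitute $p=1-b/2$ and $c\mapsto c^2$ into the ${}_6F_7$ formula of Theorem \ref{thm-3}. The lower parameter $\kappa$ then becomes $3/2$, which is precisely the first lower entry demanded by the statement, while every remaining parameter $\tfrac{\rho+p+\cdots}{2}$ inherits the shift $p=1-b/2$. Next I would extract the scalar factor $2^{b/2}/\sqrt{\pi}$ arising from $(2/t)^{b/2}/\sqrt{\pi}$; this combines with the $2^{-p}=2^{b/2-1}$ standing in front of Theorem \ref{thm-3} and is designed to collapse to an overall $\sqrt{\pi}$ (or to be absorbed into the displayed $\Gamma$-quotient). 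Finally, replacing $\rho$ by $\rho+b/2$ throughout removes all dependence on $b$: the integrand reduces to $t^{\rho-1}\sin(ct)$, each occurrence of $\rho+p$ collapses to the stated $\rho$-expressions, and the hypotheses $\RM(\gamma)>0$ together with the lower bound on $\RM(\rho)$ become exactly the specialization at $p=1-b/2$ of the conditions of Theorem \ref{thm-3}. The hyperbolic-sine identity follows by the identical computation with $c\mapsto -c^2$, which merely flips the sign of the series argument to $+\tfrac{c^2x^2}{4}$ while leaving the $\Gamma$-prefactor and all hypergeometric parameters unchanged.

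The hard part will be the constant and exponent bookkeeping, not any analytic subtlety. Because the Legendre duplication step \eqref{eqn-31-bessel}--\eqref{eqn-32-bessel} has already been carried out once inside Theorem \ref{thm-3}, no new question of convergence, term-by-term integration, or analyticity of ${}_6F_7$ arises. What must be checked with care is that (i) the powers of $2$ accumulated from $(2/t)^{b/2}$, from $2^{-p}$, and from the rescaling $c\mapsto\pm c^2$ combine correctly, (ii) the exponent of $x$ tracks the shift $\rho\mapsto\rho+b/2$ consistently with the displayed power, and (iii) the pairwise ``duplicated'' parameters $\tfrac{a}{2},\tfrac{a+1}{2}$ produced by \eqref{eqn-32-bessel} in Theorem \ref{thm-3} land precisely on the upper entries listed here after the substitution. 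This is the step where the analogy to the cosine corollary (where $\kappa=1/2$) must be made fully explicit, since the sole structural difference between the two results is the replacement $1/2\mapsto 3/2$ in the first lower slot, a feature one should verify flows entirely from the shift $p=-b/2\mapsto p=1-b/2$.
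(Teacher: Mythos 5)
Your overall strategy --- specialize Theorem \ref{thm-3} at $p=1-b/2$ with $c\mapsto \pm c^2$, peel off the constant $(2/t)^{b/2}/\sqrt{\pi}$, and then shift $\rho\mapsto\rho+b/2$ --- is exactly the route the paper intends (the paper offers no proof, only the remark that the result ``follows from Theorem \ref{thm-3} and Theorem \ref{thm-4} with taking $p=-b/2$\dots'', where $p=-b/2$ is itself a slip, since that choice produces the cosine, not the sine; you correctly use $p=1-b/2$). However, the one step you explicitly defer --- ``each occurrence of $\rho+p$ collapses to the stated $\rho$-expressions'' --- is precisely where the argument breaks, so the proposal has a genuine gap rather than a routine bookkeeping exercise. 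With $p=1-b/2$ one has $t^{\rho-1}\mathcal{W}_{1-b/2,b,c^2}(t)=\bigl(2^{b/2}/\sqrt{\pi}\bigr)\,t^{(\rho-b/2)-1}\sin(ct)$ (up to a residual factor of $c$: the series \eqref{eqn-3-bessel} actually gives $\mathcal{W}_{1-b/2,b,c^2}(z)=(2/z)^{b/2}\sin(cz)/(c\sqrt{\pi})$), so after renaming $\rho-b/2$ as $\rho$, i.e.\ substituting $\rho\mapsto\rho+b/2$, every instance of $\rho+p$ in Theorem \ref{thm-3} becomes $\rho+b/2+1-b/2=\rho+1$, \emph{not} $\rho$. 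Carrying the computation through therefore yields
\begin{equation*}
\bigl(I_{0,+}^{\alpha,\alpha',\beta,\beta',\gamma}\,t^{\rho-1}\sin(ct)\bigr)(x)
=\frac{\Gamma(\rho+1)\,\Gamma(\rho+1+\gamma-\alpha-\alpha'-\beta)\,\Gamma(\rho+1+\beta'-\alpha')}{c^{-1}\,\Gamma(\rho+1+\beta')\,\Gamma(\rho+1+\gamma-\alpha-\alpha')\,\Gamma(\rho+1+\gamma-\alpha'-\beta)}\,x^{\rho-\alpha-\alpha'+\gamma}\times{}_6F_7[\cdots],
\end{equation*}
with upper parameters $\tfrac{\rho+1}{2},\tfrac{\rho+2}{2},\dots$ and first lower parameter $\tfrac32$ --- that is, the displayed corollary with $\rho$ replaced by $\rho+1$ throughout (and a shifted power of $x$), not the displayed corollary itself. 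The constants do cancel as you predict ($2^{-p}=2^{b/2-1}$ against $2^{b/2}$ and $\Gamma(3/2)=\sqrt{\pi}/2$), but the $\rho$-shift by $1$ does not disappear. So either you must exhibit why the shift cancels (it does not), or you must conclude that the specialization lands on a formula different from the one stated; a sanity check via the Maclaurin series of $\sin$, Lemma \ref{lem-1} with $\rho\mapsto\rho+2k+1$, and the duplication formula \eqref{eqn-31-bessel} confirms the $\rho+1$ version. Your proof as written would therefore stall at the final identification step.
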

\begin{corollary}
Let $\alpha,\alpha',\beta,\beta',\gamma,\rho, c \in \mathbb{C}$.
Suppose that
$\RM{(\gamma)}>0$ and  $\RM{(\rho)}< 1+ \min\{\RM{(-\beta)}, \RM{(\alpha+\alpha'-\gamma)}, \RM{(\alpha+\beta'-\gamma)}\}.$
Then
\begin{align*}
&\left(I_{0,-}^{\alpha,\alpha^{'},\beta,\beta',\gamma}\; t^{\rho}\sin\left(\frac{c}{t}\right)\right )(x)\\&=\frac{\Gamma(\alpha+\alpha{'}-\gamma-\rho)\Gamma(\alpha+\beta{'}-\gamma-\rho)}
{\Gamma(-\rho)\Gamma(\alpha+\alpha{'}+\beta{'}-\gamma-\rho)}\frac{\Gamma(-\beta-\rho)}{\Gamma(\alpha-\beta-\rho)}
x^{\rho-\alpha-\alpha{'}+\gamma}\\
&\hspace{.5in}\times{}_6F_7\left[\begin{array}{lll}
\frac{\alpha+\alpha{'}-\rho}{2},\frac{\alpha+\alpha{'}-\gamma-\rho+1}{2},\frac{\alpha+\beta{'}-\gamma-\rho}{2},
\frac{\alpha+\beta{'}-\gamma-\rho+1}{2},\frac{-\beta-\rho}{2},\frac{-\beta-\rho+1}{2};\\
\frac{3}{2},-\frac{\rho}{2},-\frac{\rho+1}{2},\frac{\alpha+\alpha{'}+\beta{'}-\gamma-\rho}{2},
\frac{\alpha+\alpha{'}+\beta{'}-\gamma-\rho+1}{2},\frac{\alpha-\beta-\rho}{2},\frac{\alpha-\beta-\rho+1}{2},
\end{array}\bigg|-\frac{c^2}{4x^2}\right]
\end{align*}
and
\begin{align*}
&\left(I_{0,-}^{\alpha,\alpha^{'},\beta,\beta',\gamma}\; t^{\rho}\sinh\left(\frac{c}{t}\right)\right )(x)\\&=\frac{\Gamma(\alpha+\alpha{'}-\gamma-\rho)\Gamma(\alpha+\beta{'}-\gamma-\rho)}
{\Gamma(-\rho)\Gamma(\alpha+\alpha{'}+\beta{'}-\gamma-\rho)}\frac{\Gamma(-\beta-\rho)}{\Gamma(\alpha-\beta-\rho)}
x^{\rho-\alpha-\alpha{'}+\gamma}\\
&\hspace{.5in}\times{}_6F_7\left[\begin{array}{lll}
\frac{\alpha+\alpha{'}-\rho}{2},\frac{\alpha+\alpha{'}-\gamma-\rho+1}{2},\frac{\alpha+\beta{'}
-\gamma-\rho}{2},\frac{\alpha+\beta{'}-\gamma-\rho+1}{2},\frac{-\beta-\rho}{2},\frac{-\beta-\rho+1}{2};\\
\frac{3}{2},-\frac{\rho}{2},-\frac{\rho+1}{2},\frac{\alpha+\alpha{'}+\beta{'}-\gamma
-\rho}{2},\frac{\alpha+\alpha{'}+\beta{'}-\gamma-\rho+1}{2},\frac{\alpha-\beta-\rho}{2},\frac{\alpha-\beta-\rho+1}{2}
\end{array}\bigg|\frac{c^2}{4x^2}\right].
\end{align*}
\end{corollary}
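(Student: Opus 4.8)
The plan is to obtain both identities as specializations of Theorem~\ref{thm-4}, in the same way the ordinary and hyperbolic cosine images were deduced from it; no new analytic ingredient is needed, and the whole argument is a substitution followed by parameter bookkeeping.

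First I would rewrite the two functions in terms of a generalized Bessel function. Putting $z=1/t$ in the case $p=1-b/2$ of $(\ref{eqn:cos-0})$ gives
\[
\sin\!\left(\frac{c}{t}\right)=\sqrt{\pi}\,(2t)^{-b/2}\,\mathcal{W}_{1-\frac{b}{2},\,b,\,c^{2}}\!\left(\tfrac1t\right),
\qquad
\sinh\!\left(\frac{c}{t}\right)=\sqrt{\pi}\,(2t)^{-b/2}\,\mathcal{W}_{1-\frac{b}{2},\,b,\,-c^{2}}\!\left(\tfrac1t\right).
\]
Multiplying by $t^{\rho}$ and invoking the linearity of $I_{0,-}^{\alpha,\alpha',\beta,\beta',\gamma}$, the left-hand sides of the asserted formulas equal
\[
\sqrt{\pi}\,2^{-b/2}\left(I_{0,-}^{\alpha,\alpha',\beta,\beta',\gamma}\,t^{\rho-b/2}\,\mathcal{W}_{1-\frac{b}{2},\,b,\,\pm c^{2}}(1/t)\right)(x),
\]
so the task reduces to applying Theorem~\ref{thm-4} with $p=1-b/2$, with $c$ replaced by $\pm c^{2}$, and with the power written as $t^{\rho-b/2}=t^{(\rho-b/2+1)-1}$, i.e. with the theorem's running exponent set to $\rho-b/2+1$. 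Under this choice the theorem's admissibility condition $\RM(\rho_{\mathrm{thm}}-p)<1+\min\{\cdots\}$ collapses, since $\rho_{\mathrm{thm}}-p=\rho$, to exactly the hypothesis $\RM(\rho)<1+\min\{\RM(-\beta),\RM(\alpha+\alpha'-\gamma),\RM(\alpha+\beta'-\gamma)\}$ of the corollary.

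Next I would carry out the substitution and collect constants. The decisive simplification is that $p=1-b/2$ forces $\kappa=p+(b+1)/2=3/2$, independent of $b$; this is the origin of the single lower parameter $3/2$ in the ${}_6F_7$ (the cosine analogue produced $1/2$ from $\kappa=1/2$ in the same manner). I expect the only real point to watch is the cancellation of the $b$-dependent constants: the factor $\sqrt{\pi}\,2^{-b/2}$ from the Bessel representation multiplies the $2^{-p}=2^{b/2-1}$ appearing in the prefactor of Theorem~\ref{thm-4} to give $\sqrt{\pi}/2$, which is exactly absorbed by the factor $1/\Gamma(\kappa)=1/\Gamma(3/2)=2/\sqrt{\pi}$ already present there; their product is $1$. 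Hence the surviving prefactor is a pure quotient of Gamma functions, carrying neither a power of $2$ nor $\sqrt{\pi}$ nor $\Gamma(\kappa)$, with $3/2$ retained only as a lower entry of the ${}_6F_7$.

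Finally I would propagate the shift $\rho_{\mathrm{thm}}=\rho-b/2+1$ and $p=1-b/2$ through the remaining Gamma arguments and through the six upper and seven lower parameters of the ${}_6F_7$ in Theorem~\ref{thm-4}. Since $b$ enters only through the combinations $\rho-b/2$ and $p=1-b/2$, which recombine into the corollary's $\rho$, every explicit occurrence of $b$ must disappear, leaving the two displayed ${}_6F_7$ series with arguments $-c^{2}/(4x^{2})$ and $c^{2}/(4x^{2})$. I do not anticipate a genuine obstacle, as the statement is a specialization of Theorem~\ref{thm-4}; the only delicate matter is the clerical discipline of applying the shift $\rho\mapsto\rho-b/2+1$ uniformly to every argument and confirming the cancellation of $\sqrt{\pi}$, the powers of $2$, and $\Gamma(3/2)$.
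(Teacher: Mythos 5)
Your strategy is exactly the one the paper itself uses (the paper disposes of this corollary with a one-line pointer to Theorem~\ref{thm-4}, specialized and with $c$ replaced by $\pm c^2$), and you improve on it in two respects: you correctly take $p=1-b/2$ where the paper's text erroneously says $p=-b/2$, and you correctly identify $\kappa=3/2$ and the cancellation $\sqrt{\pi}\,2^{-b/2}\cdot 2^{-p}\cdot\Gamma(3/2)^{-1}=1$. So the route is right and matches the source.

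The gap is in the final step you dismiss as ``clerical discipline'': if you actually propagate $\rho_{\mathrm{thm}}=\rho+p$ (equivalently $\rho-b/2+1$) through Theorem~\ref{thm-4}, you do \emph{not} land on the displayed formulas. Since $p-\rho_{\mathrm{thm}}+1=1-\rho$, every Gamma argument and every ${}_6F_7$ parameter comes out of the form $\alpha+\alpha'-\gamma+1-\rho$, $-\beta+1-\rho$, $1-\rho$, etc., and the power of $x$ is $x^{\rho_{\mathrm{thm}}-p-\alpha-\alpha'+\gamma-1}=x^{\rho-\alpha-\alpha'+\gamma-1}$; the corollary instead displays $\alpha+\alpha'-\gamma-\rho$, $-\beta-\rho$, $-\rho$ and $x^{\rho-\alpha-\alpha'+\gamma}$. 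Everything is off by the shift $\rho\mapsto\rho+1$. (The contrast with the cosine case is instructive: there $p=-b/2$ gives $\rho_{\mathrm{thm}}-p=\rho+1$ and the printed $-\rho$ forms do emerge; the printed sine corollary is visibly the cosine one with $1/2$ changed to $3/2$, which is not what the specialization produces.) In addition, the representation you quote from the paper, $\mathcal{W}_{1-b/2,b,c^2}(z)=(2/z)^{b/2}\sin(cz)/\sqrt{\pi}$, is itself missing a factor of $1/c$ (check it against $J_{1/2}$ or against the defining series, whose $k$-th term carries $c^{2k}$ while $\sin(cz)$ carries $c^{2k+1}$), so a stray factor of $c$ also survives. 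Your assertion that ``every explicit occurrence of $b$ must disappear, leaving the two displayed ${}_6F_7$ series'' is therefore unverified and, as far as the printed statement goes, false: the computation you outline proves a corrected version of the corollary, not the one stated, and a complete write-up must either record the corrected parameters or explain the normalization that reconciles them.
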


\section{Concluding Observations}\label{sec:conclusion}
In this section some consequence of the main result derived in previous sections
are given in details. Also comparison with other known results from literature are listed.
\begin{enumerate}
\item[1.] We remark that  all the results given by Purohit {\it et. al.} \cite{Purohit}
are follows from the results derived in this article by setting $b=1=c$.

\item[2.] The results in Section $\ref{sec:wright-fun}$ and Section $\ref{sec:hyper-series}$
also provide the Marichev-Saigo-Maeda fractional integration of modified Bessel function and spherical
Bessel functions.

\item[3.] Set $\alpha'=0$ in the operator $(\ref{eqn-1-frac-opt})$ and  $(\ref{eqn-2-frac-opt})$. Then  due to identities given by Saxena and Saigo \cite[ P.93]{saxena-saigo}, it follows that
    \begin{align*}
    \left(I_{0, +}^{\alpha,  0, \beta, \beta', \gamma} f\right)(x) &=   \left(I_{0, x}^{\gamma,\alpha-\gamma, -\beta} f\right)(x)\\
     \left(I_{0, -}^{\alpha,  0, \beta, \beta', \gamma} f\right)(x) &=   \left(I_{x, \infty}^{\gamma,\alpha-\gamma, -\beta} f\right)(x)
    \end{align*}
The generalized integral transforms that appear in the right hand side of above equations is due to Saigo \cite{M Saigo}
and defined as follows:
\begin{equation}\label{eqn-1-sagio}
\left(I_{0_{+}}^{\alpha,\beta,\eta}f\right)(x)=\frac{x^{-\alpha-\beta}}{\Gamma(\alpha)}\displaystyle
\int_{0}^{x}(x-t)^{\alpha-1}{}_2F_{1}
\left(\alpha+\beta,-\eta;\alpha;1-\frac{t}{x}\right)f(t)dt
\end{equation}
and
\begin{equation}\label{eqn-2-sagio}
\left(I_{-}^{\alpha,\beta,\eta}f\right)(x)=\frac{1}{\Gamma(\alpha)}\displaystyle\int_{x}^{\infty}
(t-x)^{\alpha-1}t^{-\alpha-\beta}{}_2F_{1}
\left(\alpha+\beta,-\eta;\alpha;1-\frac{x}{t}\right)f(t)dt,
\end{equation}
where $\Gamma(\alpha)$ is the Euler gamma function \cite{Rainville},  and
${}_2F_{1}(a,b;c;x)$ is the Gauss hypergeometric function.

The above fact help us to conclude that all the results given in \cite{Kilbas-itsf} and \cite{Pradeep-Saiful} can also
be  obtained from the results in this article by setting $\alpha'=0$.

\item[4.] Note that Riemann-Liouville, Weyl and Erd\'elyi-Kober fractional calculus \cite{Samko} are
special case of Saigo's operator $(\ref{eqn-1-sagio})$ and $(\ref{eqn-2-sagio})$. Thus this article is also useful
to derive ceratin composition formula involving Riemann-Liouville, Weyl and Erdl\'eyi-Kober fractional calculus and
Bessel, modified Bessel, and spherical Bessel function of first kind.
\end{enumerate}

\section*{Reference}

\end{document}